\theoremstyle{plain}
\newtheorem{lem}{Lemma}
\newtheorem{lemma}[lem]{Lemma}
\newtheorem{theorem}[lem]{Theorem}
\newtheorem{proposition}[lem]{Proposition}
\newtheorem{fact}[lem]{Fact}
\newtheorem{assumption}[lem]{Assumption}
\theoremstyle{definition}
\newtheorem{definition}[lem]{Definition}
\newtheorem{example}[lem]{Example}
\newtheorem{remark}[lem]{Remark}
\newtheorem{notation}[lem]{Notation}
\numberwithin{equation}{section}
\numberwithin{lem}{section}
\newcommand{\mathfont}{\mathbf}
\newcommand{\Z}{\mathfont Z}
\newcommand{\Q}{\mathfont Q}
\newcommand{\QQ}{\mathfont Q}
\newcommand{\F}{\mathfont F}
\newcommand{\FF}{\mathfont F}
\newcommand{\fp}{\mathfrak{p}}
\newcommand{\cD}{\mathcal{D}}
\newcommand{\cO}{\mathcal{O}}
\DeclareFontFamily{OT1}{rsfs}{}
\DeclareFontShape{OT1}{rsfs}{n}{it}{<-> rsfs10}{}
\DeclareMathAlphabet{\mathscr}{OT1}{rsfs}{n}{it}
\newcommand \tensor[1] {\otimes_{#1}}
\DeclareMathOperator{\ord}{ord}
\DeclareMathOperator{\rank}{rank}
\DeclareMathOperator{\Mat}{Mat}
\newcommand{\Zp}{\mathfont{Z}_p}
\newcommand{\Zq}{\mathfont{Z}_q}
\newcommand{\Qq}{\mathfont{Q}_{q}}
\newcommand{\Fp}{\mathfont{F}_p}
\newcommand{\PP}{\mathfont{P}}
\DeclareMathOperator{\Spec}{Spec}
\DeclareMathOperator{\Jac}{Jac}
\DeclareMathOperator{\GL}{GL}
\newcommand{\Xlift}{\widetilde{X}}
\newcommand{\Xgen}{\widetilde{X}_{\Qq}}
\newcommand{\Vlift}{\widetilde{V}}
\newcommand{\Ulift}{\widetilde{U}}
\newcommand{\Ugen}{\widetilde{U}_{\Qq}}
\newcommand{\Qlift}{\widetilde{Q}}
\newcommand{\red}{\textrm{red}}
\newcommand{\pr}{\operatorname{pr}}
\DeclareMathOperator{\HH}{\mathrm{H}}
\newcommand{\HdR}{\HH_{\textrm{dR}}}
\newcommand{\Hrig}{\HH_{\textrm{rig}}}
\newcommand{\Hcrys}{\HH_{\textrm{crys}}}
\newcommand{\rig}{\textrm{rig}}
\newcommand{\crys}{\textrm{crys}}
\newcommand{\Rdagger}{\mathcal{R}^\dagger}
\newcommand{\powerseries}[1]{[\![#1]\!]}
\newcommand{\Supp}{\operatorname{Supp}}
\newcommand{\Image}{\operatorname{Im}}
\newcommand{\Bcrys}{\mathcal{B}_{\textrm{crys}}}
\newcommand{\Lcrys}{\Lambda_{\textrm{crys}}}
\newcommand{\Brig}{\mathcal{B}_{\textrm{rig}}}
\newcommand{\Lrig}{\Lambda_{\textrm{rig}}}
\newcommand{\Nint}{N_{\textrm{int}}}
\newcommand{\Bone}{\mathcal{B}_1}
\newcommand{\Btwo}{\mathcal{B}_2}
\newcommand{\omegazero}{\widetilde{\omega}}
\title[Computing Crystalline Cohomology]{Computing Crystalline Cohomology and $p$-Divisible Groups for Curves over Finite Fields} %
\author{Jeremy Booher}
\address{University of Florida}
\email{jeremybooher@ufl.edu}
\begin{document} 

\begin{abstract} 
Let $X$ be a smooth projective curve over a finite field of characteristic $p$.  We describe and implement a practical algorithm for computing the $p$-divisible group $\Jac(X)[p^\infty]$ via computing its Dieudonn\'{e} module, or equivalently computing the Frobenius and Verschiebung operators on the first crystalline cohomology of $X$.  We build on Tuitman's $p$-adic point counting algorithm, which computes the rigid cohomology of $X$ and requires a ``nice'' lift of $X$ to be provided.  
\end{abstract}

\maketitle

\section{Introduction}

Let $X$ be a smooth projective curve of genus $g$ over a finite field $\F_q$ with $q = p^r$.  The $p$-divisible group $\Jac(X)[p^\infty]$ is built out of the finite flat group schemes $\Jac(X)[p^n]$.  It determines the zeta function, Newton polygon, and Ekedahl-Oort type of $X$, and is a geometric generalization of the $p$-part of the class group of $\F_q(X)$.  Indeed, the $p$-primary part of the class group is the $\F_q$-points of the (not necessarily \'{e}tale) $\Jac(X)[p^\infty]$.  

Equivalently, one can consider the first crystalline cohomology $\Hcrys^1(X)$.  Set $\Zq = W(\F_q)$, the unique unramified extension of $\Zp$ with residue field $\F_q$, and let $\Qq$ be the field of fractions.  Then $\Hcrys^1(X)$ is a free  $\Zq$-module of rank $2g$ and is equipped with Frobenius and Verschiebung endomorphisms $F$ and $V$.  It is naturally isomorphic to the Dieudonn\'{e} module of $\Jac(X)[p^\infty]$, so determining $\Hcrys^1(X)$ with the actions of $F$ and $V$ is equivalent to determining the $p$-divisible group.  (See Section~\ref{sec:cohomology} for a review of the cohomology theories and $p$-divisible groups.)

We introduce an efficient and practical algorithm for computing $\Hcrys^1(X)$.  More precisely, given an integer $N$ we compute the Dieudonn\'{e} module of $\Jac(X)[p^N]$, or equivalently matrices for $F$ and $V$ on $\Hcrys^1(X)$ with $p$-adic precision $N$.  We build on $p$-adic point counting algorithms which work by approximating the Frobenius morphism on the rigid cohomology of $X$, in particular work of Tuitman \cite{TuitmanI,TuitmanII}.  Tuitman works with a plane curve model $Q(x,y)=0$ for $X$, and exploits the projection maps to the coordinate axes. 

Our main contribution is to realize the crystalline cohomology as the de Rham cohomology of a lift $\Xlift/\Zq$ of $X$ and determining how this sits as a lattice inside the rigid cohomology $\Hrig^1(X)$. An idealized version is presented as Algorithm~\ref{alg1}.

\begin{algorithm}
\caption{Idealized Algorithm for Computing Crystalline Cohomology} 
 \label{alg1}
\begin{algorithmic}[1]
    \STATE Compute a $\Zq$-basis $\Bcrys$ for $\HdR^1(\Xlift) \cong \Hcrys^1(X)$ (see Fact~\ref{fact:differentialssecondkind}).
    \STATE Compute a $\Qq$-basis $\Brig$ for $\Hrig^1(X)$ (see Remark~\ref{remark:Espaces}).
    \STATE Compute the Frobenius on $\Hrig^1(X)$ using Tuitman's algorithm, and express it as a matrix relative to $\Brig$.
   \STATE  Represent $\Hcrys^1(X)$ as a $\Zq$-lattice in the $\Qq$-vector space $\Hrig^1(X)$ (see Fact~\ref{fact:rigid}), and hence obtain a matrix for Frobenius on $\Hcrys^1(X)$ relative to $\Bcrys$.
   \STATE Recover $V$ from the relation $F V = p$.
\end{algorithmic}
\end{algorithm}

The two key ingredients are an algorithm to compute $\HdR^1(\Xlift/\Zq)$ (see Section~\ref{sec:computing cohomology}) and understanding how much precision is needed internally in Tuitman's algorithm to guarantee the Dieudonn\'{e} module is correct modulo $p^N$ (Section~\ref{sec:precision}).  The former is non-trivial as $\Zq$ is not a field.  The latter encounters errors in the implementation of Tuitman's algorithm \cite{pcc} which we also address.  

We analyze our algorithm in Section~\ref{sec:analysis}.  We track several necessary assumptions, some inherited from Tuitman's work, which guarantee the algorithm works and the running time is polynomial in $p$, $d_x$, $d_y$, $r$, and $N$.  Here $d_x$ and $d_y$ are the degrees of the projection maps to the coordinate axes.

\begin{remark}
A key feature of our situation is that we consider $p$-power torsion in $\Jac(X)$ in characteristic $p$.  If $\ell \neq p$, there is considerable work studying the $\ell$-power torsion in $\Jac(X)$ and $\ell$-adic cohomology of $X$, for example in the context of point counting \cite{pila,couveignes,hi98,jin20,levrat24}.  Previous algorithmic work on $p$-power torsion in characteristic $p$ has only been able to access the isogeny class of $\Jac(X)[p^\infty]$ (via rigid cohomology, which has $\Qq$-coefficients) as used in $p$-adic point counting algorithms, or the $p$-torsion (via the de Rham cohomology of $X$  \cite{weir_code}, which has $\F_q$-coefficients).  
\end{remark}

\begin{remark}
We implement our algorithm for curves over $\Fp$ \cite{github}.  We use Magma \cite{magma}, and build on Tuitman's implementation \cite{pcc} of his algorithm.  We also incorporate a few tweaks from Balakrishna and Tuitman's work on Coleman integration \cite{BT20}.
As in Tuitman's implementation, we attempt to compute some auxiliary information in an ad hoc way which usually works but is not guaranteed.  See Assumption~\ref{assumption:nicelift} and Assumption~\ref{assumption:funcdif}.  The algorithm is practical: see Example~\ref{ex:precision} and Remark~\ref{remark:runningtime} as well as additional examples in \cite{github}.
\end{remark}

\begin{remark}
We also explain a precision issue in Tuitman's implementation \cite{pcc} (present in the version incorporated into Magma), a sporadic source of errors computing zeta functions.  See Section~\ref{ss:precision example}, and our github repository \cite{github2} for a version of the point counting algorithm which works around it.
\end{remark}

\subsection{Acknowledgments}  We thank Jen Balakrishna, Bryden Cais, Richard Crew, Kiran Kedlaya, and Jan Tuitman for helpful conversations.  

\section{The Situation} \label{sec:situation}

\subsection{Theoretical Setup} As in the introduction, let $p$ be a prime and $q = p^r$ a prime power.  Let $\Z_q$ be the unramified extension of $\Z_p$ with residue field $k=\FF_q$, and $\QQ_q$ be the fraction field of $\Z_q$.

Let $X$ be a smooth projective curve over $k$, with a finite separable map $x : X \to \PP^1_k$ of degree $d_x$.  
We inherit the setup from \cite{TuitmanII}.  In particular, we represent $X$ via a (possibly singular) plane curve $Q(x,y)=0$ for some $Q \in \F_q[x,y]$ and the morphism $x$ is projection onto the $x$-coordinate. Note $d_x$ is the degree of $Q$ in $x$.  The degree of the projection onto the $y$-coordinate $d_y$ is likewise the degree of $Q$ in $y$. We furthermore assume $Q$ is monic in $y$.

Next we lift to characteristic zero, requiring a $\Qlift \in \Z_q[x,y]$ that is monic in $y$, has the same monomials as $Q$, and reduces to $Q$.  We use the polynomial $\Qlift$ to represent a lift of $X$ to characteristic zero, which will be quite nice if we make some additional assumptions.  Letting $\Delta \in \Z_q[x]$ denote the discriminant of $\Qlift$ with respect to $y$ and $r=\Delta/\gcd(\Delta, \Delta')$, the natural map
\begin{equation}
x: \Ulift \colonequals \Spec \Z_q[x,y,1/r]/(\Qlift) \to \Vlift \colonequals \Spec \Z_q[x,1/r]
\end{equation}
is a finite \'{e}tale morphism.

We impose essentially the same assumptions as in Tuitman's work, taking into account the erratum \cite{TuitmanErrata}.

\begin{assumption} \label{assumption:nicelift}
We assume that:
\begin{enumerate}
\item we are given matrices $W^0 \in \GL_{d_x}(\Zq[x,1/r])$ and $W^\infty \in \GL_{d_x}(\Zq[x,1/x,r])$ such that letting
\[
b^0_j \colonequals \sum_{i=0}^{d_x-1} W^0_{i+1,j+1} y^i, \quad \text{and} \quad b^\infty_j \colonequals \sum_{i=0}^{d_x-1} W^\infty_{i+1,j+1} y^i 
\]
for $0\leq j < d_x$ the elements $b^0_0,\ldots, b^0_{d_x-1}$ form an integral basis for the function field $\Qq(\Ulift)$ over $\Qq[x]$ and (after reducing modulo $p$) for the function field $\F_q(U)= \F_q(X)$ over $\F_q[x]$.  We assume the same for $b^\infty_0,\ldots,b^\infty_{d_x-1}$ over $\Qq[1/x]$ and (after reducing modulo $p$) over $\F_q[1/x]$.

\item  the discriminant of $r$ is in $\Zq^\times$.

\item the discriminants of the $\Zq$-algebras $(\mathcal{R}^0/(r(X)))_{\red}$ and $(\mathcal{R}^\infty/(1/x))_{\red}$ are in $\Zq^\times$, where $\mathcal{R}^0$ is the $\Zq[x]$-module generated by $b^0_0,\ldots,b^0_{d_x-1}$ and $\mathcal{R}^\infty$ is the $\Zq[1/x]$-module generated by $b^\infty_0,\ldots,b^\infty_{d_x-1}$.  (Note for a ring $R$, the reduced ring $R_\red$ is the quotient of $R$ by its nilradical.)

\end{enumerate}
\end {assumption}

Geometrically, this assumption means that $x : \Ulift \to \Vlift$ admits a good compactification \cite[Proposition 2.3]{TuitmanII}.  More precisely: 
\begin{itemize}
\item  there is a smooth relative divisor $\mathcal{D}_{\PP^1_{\Zq}}$ such that $\Vlift = \PP^1_{\Zq} \backslash \mathcal{D}_{\PP^1_{\Zq}}$, and
\item there is a smooth proper curve $\Xlift$ over $\Zq$ and a smooth relative divisor $\mathcal{D}_{\Xlift}$ on $\Xlift$ such that $\Ulift = \Xlift \backslash \mathcal{D}_{\Xlift}$.
\end{itemize}

\begin{notation}
We again let $x$ denote the map $\Xlift \to \PP^1_{\Zq}$.  Note that this reduces to $x : X \to \PP^1_k$ in the special fiber. 

Let $U$ (resp. $V$) denote the base change of $\Ulift$ (resp. $\Vlift$) to $\F_q$.  We write $\Xgen$ (resp. $\Ugen$ \ldots.) for the generic fiber of $\Xlift$ (resp. $\Ulift$ \ldots).
\end{notation}

\begin{example}[c.f.~{\cite[Example 2.4]{BT20}}]
Suppose $p$ is odd.  If $X$ is a hyperelliptic curve of genus $g$, take $\Qlift(x,y) = y^2 - f(x)=0$ where $\deg(f)$ is odd and has distinct roots modulo $p$.  Then $r(x) = f(x)$ and $U$ is the affine curve with the ramified points removed.  It is easy to check that a choice of the required integral bases is
\[
b^0_0 = 1, \quad b^0_1 = y, \quad \quad  b^\infty_0 = 1, \quad b^\infty_1 = y/x^{g+1}.
\]
The discriminant of $r(x)$ is in $\Zq^\times$ as the roots of $f$ are distinct modulo $p$, or equivalently the ramification in the generic and special fibers is the same.  We likewise see that the third condition in Assumption~\ref{assumption:nicelift} holds, and geometrically means that the branch points of the hyperelliptic curve are distinct modulo $p$.
\end{example}

\begin{remark} \label{remark:findinglift}
As discussed in \cite[\S2]{TuitmanII}, if $x$ is tamely ramified then Assumption~\ref{assumption:nicelift} holds.  Tuitman remarks that if $p \neq 2$ then $X$ is a tame cover of the projective line so if we are willing to vary the morphism $x$ (and plane curve model $Q$) the assumption can be satisfied.  More recent work also shows a tame map exists when $p=2$ \cite{klw23}.  
However it appears a subtle question to \emph{computationally find} a lift $\Qlift$ of a $Q$ representing $X$ satisfying Assumption~\ref{assumption:nicelift}.  See \cite{CT18,CV20} for some partial results.
\end{remark}

\subsection{Practical Setup}
We will end up performing certain computations over a number field rather than a $p$-adic field.  In particular, we require $\Qlift$ to have coefficients in the ring of integers in a number field with residue field $\F_q$, not simply in $\Zq$.  This is not a loss of generality as Tuitman's implementation \cite{pcc} of his algorithm already requires this.  There is no theoretical reason we must do so, but a variety of practical reasons make this beneficial.  

\begin{remark}\label{remark:overnumfield}
The main reason Tuitman chooses to work over a number field relates to the first part of Assumption~\ref{assumption:nicelift}, about having integral bases with nice properties.  There are efficient and implemented algorithms for computing integral bases \cite{hess02,bauch16} if $\Qlift$ has coefficients in a number field, but it is not clear how to do so $p$-adically and in any case no implementation exists: see \cite[Remark 2]{CT18}.  We need these bases as input to the main portion of the algorithm, and in practice if we pick $\Qlift$ well the bases as computed by Magma ``usually'' have nice properties (see Heuristic H \cite[Definition 1.2]{CV20}).  Thus we can often lift $Q$ and compute the nice integral bases automatically. 
\end{remark}

\begin{remark}
There are some practical issues with working over a number field instead of simply $\Q$, leading Tuitman to create two versions of his program for point counting, one over prime fields and one over general finite fields.  
For simplicity, we restrict our implementation to the first case when $k = \F_q = \F_p$.  In other words, we require that the polynomial $\Qlift$ defining $\Ulift$ actually has integer coefficients.  

The only subtle point for extensions of $\F_p$ is modifying the linear algebra computations to reflect that the Frobenius and Verschiebung are semilinear.  
\end{remark}

Other parts of the computation are approximations of $p$-adic computations which cannot be performed exactly.  This entails selecting a positive integer $\Nint$ to be the \emph{internal precision} and performing computations modulo $p^{\Nint}$.  In particular, many of the steps leading to computing a matrix for Frobenius are done with fixed internal precision.  As we will find in Section~\ref{sec:precision}, it is tricky to correctly reason about the $p$-adic precision of the results of these computations.

\section{Cohomology Theories} \label{sec:cohomology}

In this section we review several $p$-adic cohomology theories, their relationships, and how they are used in the algorithm.  We do not aim for maximum generality, simply for results which apply to the smooth and proper $X/\F_q$ or $\Xlift/\Zq$ from Section~\ref{sec:situation}.  A general reference, focused on applications to point counting, is \cite{kedlaya08}.

\subsection{Rigid Cohomology} \label{ss:rigid}
See \cite{lestum07} for general background on rigid cohomology.  For our purposes, the main space to be used is the first rigid cohomology $\Hrig^1(X)$, which is a $\Qq$-vector space of dimension $2 g(X)$. 
Most of the computations actually take place in the larger $\Hrig^1(U)$: $\Hrig^1(X)$ is the kernel of the sum of the residue maps at the points in $X-U$ (see \cite[Theorem 3.11]{TuitmanII}).  The space $\Hrig^1(U)$ may be realized as the  cokernel of the map $d : \Rdagger \to \Omega^1(\Ugen) \otimes  \Rdagger$, where $ \Rdagger$ is the ring of overconvergent functions on $\Ulift$.   This alternate description makes it easy to lift the Frobenius and determine the Frobenius action on rigid cohomology.  
Tuitman's algorithm computes (a $p$-adic approximation to) this map in order to count points on $X$.  We will build on this capacity.

\begin{remark} \label{remark:Espaces}
Tuitman provides an explicit description \cite[Theorem 3.6]{TuitmanII}: 
\begin{equation} \label{eq:E0Einfty}
\Hrig^1(U) \cong \frac{E_0 \cap E_\infty}{d (B_0 \cap B_\infty)}
\end{equation}
where $E_0 \cap E_\infty$ and $d(B_0 \cap B_\infty)$ are finite-dimensional $\Qq$-vector spaces of differentials with explicit forms describing their behavior above ramified points and infinity.  This representation is used internally in the implementation, but the exact details need not concern us.  We will later just need the following two ideas:
\begin{enumerate}
\item  using this representation, the implementation of Tuitman's algorithm computes a basis $\Brig$ for $\Hrig^1(X)$.  The $\Zq$-span of these basis elements is a $\Zq$-lattice $\Lrig$ in $\Hrig^1(X)$. 
\item  given an element of $\Omega^1(\Ugen) \otimes \Rdagger$ representing a rigid cohomology class, we can compute an alternate representative from  $E_0 \cap E_\infty$.  
\end{enumerate}
The second works by replacing the differential with an equivalent one with better behavior at the ramified points, then doing the same to get better behavior at infinity so that it lies in $E_0 \cap E_\infty$: see \cite[Theorem 3.7, 3.8]{TuitmanII}.  Tuitman also computes the residue maps so we can recover $\Hrig^1(X)$.
\end{remark}

The two major remaining steps to count points are to approximate a lift of Frobenius to $\Rdagger$ and then to 
use it to approximate the matrix of Frobenius on $\Hrig^1(U)$ and then $\Hrig^1(X)$.  Note that applying the Frobenius to a differential in $E_0 \cap E_\infty$ will not produce a differential of this form, so the ability to find representatives for rigid cohomology classes using the second point above is essential.

\subsection{Algebraic de Rham Cohomology} \label{ss:derham}

Let $C$ be a smooth proper curve over a base $S$.  (We will need $S$ to be either the spectrum of a field or a DVR.)  The de Rham complex is
\[
0 \to \cO_{C/S} \overset{d} \to \Omega^1_{C/S} \to 0
\]   
and the de Rham cohomology $\HdR^\bullet(C/S)$ is the hypercohomology, which may be computed using any acyclic resolution.
We drop $S$ from the notation when it is clear from context.

 The \v{C}ech resolution is a standard acyclic resolution, but we will find it more convenient to use an alternate resolution expanding on the classical description of de Rham cohomology in terms of differentials of the second kind.  Some version of this idea has been known to experts for a while, often expressed differently: the details over a field were recently written down in \cite[\S2]{bgkm}. 

\begin{definition}  \label{defn:secondkind}
For a closed point $Q$ of $C$, let $\widehat{\mathcal{O}}_{C,Q}$ denote the completed local ring at $Q$, and let $K_Q$ be its field of fractions.  
        An enhanced differential of the second kind on $C$ is a pair $(\omega,(f_Q)_{Q \in C})$ where:
\begin{enumerate}
	\item $\omega$ is a (rational) differential on $C$;
	\item for each $Q \in C$, $f_Q \in K_Q/\widehat{\mathcal{O}}_{C,Q}$ and if $\tilde{f}_Q \in K_Q$ represents $f_Q$ then $\omega-d\tilde{f}_Q \in \Omega^1_{\widehat{\mathcal{O}}_{C,Q}/S}$. 
\end{enumerate} 
\end{definition}

Over a field of characteristic zero, the $(f_Q)_{Q \in C}$ can be recovered from the local expansions of $\omega$ via taking an antiderivative, provided all the residues of $\omega$ vanish.  This recovers the classic notation of differentials of the second kind.  

We can generalize the classic description of de Rham cohomology as differentials of the second kind modulo exact differentials.
For a function $f$ on $C$, we let $d_C(f) = (df,(f)_{Q \in C})$ and note the image is an enhanced differentials of the second kind.  We call the image of $d_C$ the exact enhanced differentials of the second kind.

\begin{fact}[see {\cite[Theorem 2.6]{bgkm}}]
\label{fact:differentialssecondkind}
Let $S$ be the spectrum of a field or a DVR.
Then $\HdR^1(C/S)$ is isomorphic to the space of enhanced differentials of the second kind modulo exact ones.
\end{fact}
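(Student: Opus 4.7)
The plan is to compute $\HdR^1(C/S)$ as the cohomology of a double complex whose columns resolve $\mathcal{O}_C$ and $\Omega^1_{C/S}$ by constant sheaves and skyscraper sheaves concentrated at codimension-one points. Specifically, let $\mathcal{K}_C$ denote the sheaf of meromorphic functions on $C$ (constant on a connected component), and for each relevant point $Q$ write $i_Q$ for the inclusion. I would consider the two-column double complex
\[
\begin{array}{ccc}
\mathcal{K}_C & \xrightarrow{\ d\ } & \mathcal{K}_C\otimes \Omega^1_{C/S}\\
\downarrow & & \downarrow\\
\bigoplus_Q i_{Q*}\bigl(K_Q/\widehat{\mathcal{O}}_{C,Q}\bigr) & \xrightarrow{\ d\ } & \bigoplus_Q i_{Q*}\bigl(\Omega^1_{K_Q/S}/\Omega^1_{\widehat{\mathcal{O}}_{C,Q}/S}\bigr)
\end{array}
\]
together with the natural augmentations $\mathcal{O}_C \to \mathcal{K}_C$ and $\Omega^1_{C/S} \to \mathcal{K}_C\otimes\Omega^1$.

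First I would show that each column is an acyclic resolution of its top term. Exactness at $Q$ is the statement that a rational function (or differential) is regular at $Q$ iff its image in $K_Q/\widehat{\mathcal{O}}_{C,Q}$ (resp.\ its image in $\Omega^1_{K_Q}/\Omega^1_{\widehat{\mathcal{O}}_{C,Q}}$) vanishes; over a field this is classical, and over a DVR base it follows by checking stalks at codimension-one points (horizontal divisors and the special fiber), whose local rings are still DVRs. Acyclicity is automatic since $\mathcal{K}_C$ is constant (hence flasque) and skyscraper sheaves are flasque. Consequently, taking global sections of the double complex produces a total complex computing the hypercohomology of $\mathcal{O}_C \to \Omega^1_{C/S}$, which by definition is $\HdR^{\bullet}(C/S)$.

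Next I would identify $H^1$ of this total complex. In degrees zero through two it reads
\[
K \xrightarrow{\ \partial^0\ } \Bigl(\Omega^1_{K/S}\oplus \bigoplus_Q K_Q/\widehat{\mathcal{O}}_{C,Q}\Bigr) \xrightarrow{\ \partial^1\ } \bigoplus_Q \Omega^1_{K_Q/S}/\Omega^1_{\widehat{\mathcal{O}}_{C,Q}/S},
\]
where $\partial^0(f) = (df,(f\bmod \widehat{\mathcal{O}}_{C,Q})_Q)$ and $\partial^1(\omega,(f_Q)_Q) = (\omega - d\widetilde{f}_Q \bmod \Omega^1_{\widehat{\mathcal{O}}_{C,Q}})_Q$, with signs chosen so that $\partial^1\partial^0=0$ encodes $d^2=0$. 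By construction $\ker\partial^1$ is precisely the space of enhanced differentials of the second kind in the sense of Definition~\ref{defn:secondkind}, while $\Image \partial^0$ is precisely the image of $d_C$, that is, the exact enhanced differentials. Therefore $H^1$ of the total complex, which equals $\HdR^1(C/S)$, is canonically the desired quotient.

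The main obstacle is handling the DVR case cleanly: when $S$ is the spectrum of a DVR, $C$ is two-dimensional, and one must be careful to interpret the indexing set so that each $\widehat{\mathcal{O}}_{C,Q}$ is a DVR — namely, $Q$ should range over codimension-one points of $C$ rather than closed points in the surface sense. Once this interpretation is fixed, exactness of the resolution reduces to the familiar principal-parts statement on each DVR, and the rest of the argument is formal. For $S$ a field, the entire argument reduces to \cite[\S2]{bgkm}, so in writing this up I would simply explain the resolution-theoretic framework uniformly and invoke the reference for the field case while verifying the two extra stalk computations needed over a DVR.
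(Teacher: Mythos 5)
Your argument is fine over a field (it is essentially the Cousin-complex/principal-parts proof, which is what \cite[\S2]{bgkm} does), but it breaks down in the DVR case, which is the case the paper actually needs. The problem is that when $S$ is the spectrum of a DVR, $C$ is a two-dimensional regular scheme, and the two-term complex $\mathcal{K}_C \to \bigoplus_{Q} i_{Q*}(K_Q/\widehat{\mathcal{O}}_{C,Q})$ (with $Q$ ranging over codimension-one points, as you propose) is \emph{not} surjective: its cokernel is $\bigoplus_{x} i_{x*}H^2_x(\mathcal{O}_C)$ over closed points $x$, the third term of the Cousin complex, which is nonzero in dimension two. Concretely, take $\widehat{\mathcal{O}}_{C,x} \cong \Zq\powerseries{t}$ and the element of the stalk of your target whose only nonzero component is the class of $1/(pt^n)$ along the horizontal divisor $t=0$: any $f \in K$ inducing this class and regular along every other codimension-one point through $x$ (in particular along the special fiber $p=0$) would lie in $\Zq\powerseries{t}[1/t]$, whose image in $K_{(t)}/\widehat{\mathcal{O}}_{C,(t)}$ has $\Zq$-integral polar coefficients, so it can never produce the coefficient $1/p$. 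Thus your columns are not acyclic resolutions, the total complex does not compute hypercohomology, and the identification of $H^1$ fails. Relatedly, your reindexing by codimension-one points changes the object being described: Definition~\ref{defn:secondkind} indexes by closed points $Q$, whose completed local rings over a DVR are the two-dimensional rings $R_P\powerseries{t_P}$ used in the paper, not DVRs.

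The paper avoids this entirely by replacing the full principal-parts resolution with the finite pole-order resolution of Figure~\ref{fig:acyclic resolution}: one fixes a relative effective divisor $D$ with $\deg(nD) > \max(2g-2,0)$ and uses $0 \to \mathcal{O}_{C/S} \to \mathcal{O}_{C/S}(nD) \to \mathcal{O}_{C/S}(nD)|_{nD} \to 0$ (and its twin for $\Omega^1$). Here surjectivity is automatic (these are just twists restricted to $nD$), the quotient is a coherent sheaf finite flat over $S$ rather than a flasque skyscraper, and acyclicity of the middle and right terms follows from the degree bound and Serre duality, with freeness of global sections over $\Zq$ as in \cite[Ch.~3, 3.1.~Lemma]{bogaart}. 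This is also what makes the statement effective. If you want to salvage your approach over a DVR you would need the full Parshin--Beilinson adelic resolution with its degree-two term, at which point the identification of $H^1$ with enhanced differentials of the second kind requires additional work; the bounded-pole resolution is the cleaner route.
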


We will explain the proof in Section~\ref{ss:derham}, which also gives a more concrete realization.  Pick an effective divisor $D$ on $C$ and a positive integer $n$ with $\deg(nD) > \max(2g(C)-2,0)$.  Then to compute $\HdR^1(C/S)$ it suffices to work with enhanced differentials of the second kind $(\omega,(f_Q)_{Q \in C})$ where $\omega \in H^0(\Omega^1_{C/S}((n+1)D))$ and $f_Q =0$ if $Q \not \in D$ and $\ord_Q(f_Q) \geq - \ord_Q(nD)$ if $Q \in D$.  We therefore omit the $f_Q$ with $Q \not \in D$ from the notation.
Classes $(\omega,(f_P)_{P \in D})$ and $(\nu,(g_P)_{P \in D})$ are equivalent if there is $h \in H^0(\cO_{C/S}(nD))$ such that
\[
\omega - \nu = dh \quad \text{and} \quad f_P - g_P \equiv h \mod{\widehat{\mathcal{O}}_{C,P}} \textrm{ at each } P \in D.
\]

\begin{remark}
The advantages of this description of the de Rham cohomology are:
\begin{enumerate}
\item it avoids having to rewrite rational differentials (or functions) as differences of differentials (or functions) with poles at specific locations, as required by the \v{C}ech description;
\item  it is easy to connect with the description of rigid cohomology in terms of differentials;
\item  it is independent of the characteristic and even works over $\Zq$.
\end{enumerate}
\end{remark}

We end by comparing with rigid cohomology.  
 We adopt the notation of Section~\ref{sec:situation}, and take $D = \mathcal{D}_{\Xlift,\Qq}$.  
 
 \begin{fact} \label{fact:rigid}
Sending the class of $(\omega,(f_P)_{P \in D})$ to the class of $\omega$ induces an isomorphism $\HdR^1(\Xgen) \cong \Hrig^1(X)$. 
\end{fact}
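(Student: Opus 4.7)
The plan is to split the proof into two steps: verify well-definedness of the assignment $[(\omega, (f_P)_{P \in D})] \mapsto [\omega]$ as a map $\HdR^1(\Xgen) \to \Hrig^1(X)$, and then show it is an isomorphism by comparing excision sequences for the two theories.

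For well-definedness, observe first that $\omega$ has poles only along $D$, so it restricts to a regular (hence overconvergent) differential on $\Ugen$, producing a class in $\Hrig^1(U)$. The crucial point is that the residue of $\omega$ at each $P \in D$ vanishes: by the defining condition of an enhanced differential, $\omega - d\tilde f_P \in \Omega^1_{\widehat{\mathcal{O}}_{\Xgen,P}/\Qq}$ is regular and so has zero residue at $P$; moreover, the residue of the exact form $d\tilde f_P$ is automatically zero in characteristic zero. Hence $\operatorname{res}_P(\omega) = 0$ for all $P \in D$, so $[\omega] \in \Hrig^1(X) \subset \Hrig^1(U)$ by \cite[Theorem 3.11]{TuitmanII}. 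Exact enhanced differentials $(dh, (h)_{P \in D})$ are sent to $[dh] = 0$, so the assignment descends to a well-defined map on cohomology.

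For the isomorphism, I would fit the source and target into excision sequences. On the de Rham side one has
\[
0 \to \HdR^1(\Xgen) \to \HdR^1(\Ugen) \xrightarrow{\bigoplus_P \operatorname{res}_P} \bigoplus_{P \in D} \Qq \to \HdR^2(\Xgen) \to 0,
\]
and on the rigid side, the analogous sequence is essentially the residue description of \cite[Theorem 3.11]{TuitmanII}. Since $\Ulift$ is smooth affine, the Monsky--Washnitzer comparison identifies $\HdR^1(\Ugen) \cong \Hrig^1(U)$, and the residue maps are defined by local analytic expansions that are compatible with this identification. The five lemma then yields an isomorphism on kernels, $\HdR^1(\Xgen) \cong \Hrig^1(X)$. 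Under the description of Fact~\ref{fact:differentialssecondkind}, the restriction map $\HdR^1(\Xgen) \to \HdR^1(\Ugen)$ is exactly $[(\omega, (f_P))] \mapsto [\omega]$, so composing with the Monsky--Washnitzer isomorphism gives precisely the map in the statement.

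The main obstacle is the compatibility of residue maps with the Monsky--Washnitzer identification: one must check that residues computed using algebraic local expansions agree with those computed in overconvergent completions at points of $D$. This is standard but requires some care. An alternative route avoiding this is to invoke the Berthelot comparison $\HdR^*(\Xgen) \cong \Hrig^*(X)$ directly (valid for any smooth proper variety over $\F_q$ with a smooth proper lift to $\Zq$), and then verify that the explicit recipe via enhanced differentials matches this abstract isomorphism.
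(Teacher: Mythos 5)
Your proposal is correct and follows essentially the same route as the paper: identify $\HdR^1(\Ugen) \cong \Hrig^1(U)$ via the comparison theorem for the open curve (the paper cites Tuitman's Theorem 3.2 / Baldassarri--Chiarellotto where you invoke Monsky--Washnitzer), and then cut out both proper cohomologies as kernels of the residue maps. Your additional checks (vanishing of residues for well-definedness, compatibility of the residue maps under the comparison) are details the paper leaves implicit by citing references, but they do not change the argument.
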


More generally, there is an isomorphism $\HdR^1(\Ugen) \cong \Hrig^1(U)$: see \cite[Theorem 3.2]{TuitmanII}, which follows from the algorithm described in \cite[\S3]{TuitmanII} or as a special case of a general result of Baldassarri and Chiarellotto \cite{BC94}.  Our result follows as $\Hrig^1(X)$ (resp. $\HdR^1(\Xgen)$) is the kernel of sums of residue maps: see \cite[Theorem 3.11]{TuitmanII} (resp. the excision sequence for de Rham cohomology \cite[\S1.6]{kedlaya08}).

\subsection{Crystalline Cohomology}
The crystalline cohomology $\Hcrys^1(X)$ of $X$ is a free $\Zq$-module of rank $2g(X)$.  The direct definition of crystalline cohomology is neither relevant nor useful for us: we  instead need the relation of crystalline cohomology with other cohomology theories and with $p$-divisible groups.

\begin{fact} \label{fact:crystallineisomorphisms}
There are natural isomorphisms $\Hcrys^1(X) \cong \HdR^1(\Xlift)$ and
\[
\Hcrys^1(X) \tensor{\Zq} \Qq \cong \HdR^1(\Xlift) \tensor{\Zq} \Qq  \cong \HdR^1(\Xgen) \cong \Hrig^1(X).
\]
\end{fact}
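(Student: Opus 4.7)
The plan is to assemble the asserted chain from three standard comparison results, verifying the hypotheses in our setting rather than reproving anything from scratch. Since the statement is labeled as a fact, the proof should be short and mostly a pointer to classical theorems, but it is worth being explicit about which hypothesis is supplied by which part of the setup in Section~\ref{sec:situation}.

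First, for the integral isomorphism $\Hcrys^1(X) \cong \HdR^1(\Xlift)$ I would invoke the Berthelot--Ogus comparison theorem, which says that for a smooth proper $k$-scheme $X$ equipped with a smooth proper lift $\Xlift$ to $\Zq$, there is a canonical isomorphism between the crystalline cohomology of $X$ and the de Rham cohomology of $\Xlift$. The geometric content of Assumption~\ref{assumption:nicelift}, as recorded in Section~\ref{sec:situation}, is precisely that $\Xlift/\Zq$ is a smooth proper curve with special fiber $X/k$, so the hypotheses of Berthelot--Ogus are satisfied. This also gives compatibility with the Frobenius endomorphisms, which is the feature we actually need downstream.

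Next, the isomorphism $\HdR^1(\Xlift) \tensor{\Zq} \Qq \cong \HdR^1(\Xgen)$ is flat base change for de Rham cohomology: the de Rham complex $\Omega^\bullet_{\Xlift/\Zq}$ consists of coherent sheaves on a proper $\Zq$-scheme, and $\Qq$ is flat over $\Zq$, so its hypercohomology commutes with $- \tensor{\Zq} \Qq$ and agrees with the hypercohomology of the de Rham complex of the generic fiber $\Xgen/\Qq$. Finally, the identification $\HdR^1(\Xgen) \cong \Hrig^1(X)$ is Fact~\ref{fact:rigid}, which was established earlier via the description in terms of enhanced differentials of the second kind together with the excision sequence (or alternatively via Baldassarri--Chiarellotto \cite{BC94}). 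Chaining the three gives the full statement.

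The only substantive ingredient is the Berthelot--Ogus comparison; the base-change step and the de Rham-to-rigid comparison are either formal or already in hand. The main obstacle, if one wants a self-contained account, is to check that the Berthelot--Ogus isomorphism is compatible with Frobenius and Verschiebung in the form used elsewhere in the paper, but this compatibility is part of the standard statement and does not require additional work here.
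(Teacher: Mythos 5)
Your proposal is correct and matches the paper's (very brief) justification: the paper cites Kedlaya's survey for the crystalline--de Rham comparison, which is exactly the Berthelot--Ogus theorem you invoke, and points to Fact~\ref{fact:rigid} for the last isomorphism, with the flat base change step left implicit. One tiny caveat: the Frobenius on $\HdR^1(\Xlift)$ is \emph{defined} by transport along the comparison isomorphism (as the paper notes just after the fact), so there is no separate compatibility to verify there.
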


See \cite[\S2.1]{kedlaya08} for a review of the relation between crystalline and de Rham cohomology: the connection to rigid cohomology is Fact~\ref{fact:rigid}.  The crystalline cohomology of $X$ naturally has
a Frobenius map $F$ induced by the functoriality.  The adjoint map $V$ satisfies $FV=VF=p$  and is known as the Verschiebung.  
Note that we then obtain similar maps on $\HdR^1(\Xlift)$, without lifting the Frobenius map of schemes to characteristic zero.  

\begin{remark}
Crystalline cohomology provides a lattice in $\Hrig^1(X)$ which is stable under Frobenius and Verschiebung.  More concretely, the matrices representing $F$ and $V$ with respect to a basis of $\Hcrys^1(X)$ have entries in $\Zq$, while the matrices with respect to a basis of $\Hrig^1(X)$  will usually have entries with negative valuation.  This is crucial for understanding the $p$-adic precision required in various steps of our algorithm.  
\end{remark}

We now turn to the connection with $p$-divisible groups.  Recall that the Dieudonn\'{e} ring is the ring $W(k)[F,V]$ and the indeterminates $F$ and $V$ satisfy the relations
\[
FV = VF = p, \quad F c = \sigma(c) F, \quad \text{and} \quad c V = V \sigma(c)
\]
where $\sigma: W(k) \to W(k)$ is the unique automorphism reducing to the Frobenius.    A \emph{Dieudonn\'{e} module} is a module over the Dieudonn\'{e} ring.   For a short guide to Dieudonn\'{e} modules and their relation with $p$-divisible groups, see \cite[\S1.4.3]{cco14}.  The key result is a Dieudonn\'{e} functor $D$ giving an anti-equivalence of categories.

In our situation, $W(k) = \Zq$ and $\Hcrys^1(X)$ is naturally a Dieudonn\'{e} module using the Frobenius and Verschiebung.

\begin{fact} \label{fact:Dieudonne}
Let $\Jac(X)[p^\infty] $ be the $p$-divisible group of the Jacobian of $X$.
There is an isomorphism of Dieudonn\'{e} modules $D(\Jac(X)[p^\infty]) \cong \Hcrys^1(X)$.
\end{fact}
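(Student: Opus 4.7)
The plan is to reduce to the Jacobian and then invoke the crystalline Dieudonn\'{e} theorem of Berthelot--Breen--Messing (with antecedents in Mazur--Messing).

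First, I would construct a natural isomorphism of Dieudonn\'{e} modules $\Hcrys^1(\Jac(X)) \cong \Hcrys^1(X)$. After harmlessly extending scalars so that $X$ has a rational point (crystalline cohomology commutes with such base change), the Abel--Jacobi map $\iota: X \to \Jac(X)$ based at that point induces a pullback $\iota^*: \Hcrys^1(\Jac(X)) \to \Hcrys^1(X)$ that is compatible with $F$ and $V$ by functoriality of the Frobenius. To see it is an isomorphism, I would tensor with $\Qq$ and invoke the comparisons of Fact~\ref{fact:crystallineisomorphisms} to reduce to the analogous statement for $\HdR^1$ of the generic fibers $\Xgen$ and $\Jac(\Xgen)$, which is the classical fact that the Abel--Jacobi map induces an isomorphism on $H^1$ for a smooth proper curve in characteristic zero. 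Since both sides are $\Zq$-lattices and $\iota^*$ is defined integrally, the rational isomorphism upgrades to an integral one (one can also check this by comparing ranks and showing cokernel-freeness).

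Second, I would invoke the Berthelot--Breen--Messing theorem: for any abelian variety $A$ over $\F_q$ there is a natural isomorphism of Dieudonn\'{e} modules $D(A[p^\infty]) \cong \Hcrys^1(A)$, identifying the Frobenius and Verschiebung on each side. Specializing to $A = \Jac(X)$ and composing with the isomorphism from the first step yields the claim.

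The main obstacle will be bookkeeping of conventions rather than any substantive new argument. The crystalline Dieudonn\'{e} functor is naturally phrased in terms of the dual abelian variety, so one needs to use the canonical principal polarization $\Jac(X) \cong \Jac(X)^\vee$ to self-identify, and then verify that the operators $F$ and $V$ on $\Hcrys^1(X)$ as introduced in Section~\ref{sec:cohomology} (defined via functoriality of the absolute Frobenius of $X$, with $V$ characterized by $FV=VF=p$) match the operators induced on $D(\Jac(X)[p^\infty])$ by the Frobenius and Verschiebung isogenies of the $p$-divisible group. With the principal polarization in hand this is routine, but writing it out precisely is where care is required.
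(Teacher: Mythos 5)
Your proposal follows the same route as the paper, which justifies this Fact in one line as ``essentially Mazur--Messing, plus the isomorphism between the first crystalline cohomology of a curve and of its Jacobian''; your two steps (Abel--Jacobi comparison of $\Hcrys^1$, then crystalline Dieudonn\'{e} theory for the abelian variety) are exactly that decomposition, just spelled out. The only soft spot is the claim that the rational isomorphism ``upgrades'' to an integral one merely because $\iota^*$ is defined integrally --- that inference is false for general lattice maps and needs the standard torsion-freeness/Hodge-filtration (or universal extension) argument --- but this is a fixable gloss on an otherwise correct and paper-matching argument.
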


This is essentially Mazur-Messing \cite{MM74}, plus the isomorphism between the first crystalline cohomology of a curve and of its Jacobian.

\begin{remark}
Similarly, $D(\Jac(X)[p^n]) \cong \Hcrys^1(X) \tensor{\Zq} \Zq/(p^n)$.
\end{remark}

\section{Computing the de Rham Cohomology of the Lift} \label{sec:computing cohomology}

\subsection{More on Enhanced Differentials of the Second Kind} \label{ss:effectivesecondkind}
We now explain how to compute a basis for $\HdR^1(\Xlift)$ using enhanced differentials of the second kind. We begin by reviewing the proof of Fact~\ref{fact:differentialssecondkind} and realizing it can be made effective.  

\begin{remark}
As previously mentioned, versions of this idea have been known to experts for a while.  Something similar appears in notes of Edixhoven \cite[Theorem 5.3.1]{edixhoven} for hyperelliptic curves, with details in the thesis of van der Boogart \cite{bogaart}.  A version also appears when Tuitman is  analyzing $p$-adic precision \cite[Proposition 4.5]{TuitmanII}.
\end{remark}

We let $C/S$ be our smooth proper curve and $D$ be a (relative) effective divisor on $C$.  Pick $n$ such that $\deg(nD) > \max(2g(C)-2,0)$.  We assume that $S$ is the spectrum of a field or a DVR.  The key idea is to compute de Rham cohomology using the acyclic ``pole-order'' resolution in Figure~\ref{fig:acyclic resolution}.

\begin{figure}[ht]
\[\begin{tikzcd}
	0 & {\Omega^1_{C/S}} & {\Omega^1_{C/S}((n+1)D)} & {\Omega^1_{C/S}((n+1)D)|_{(n+1)D}} & 0 \\
	0 & {\mathcal{O}_{C/S}} & {\mathcal{O}_{C/S}(nD)} & {\mathcal{O}_{C/S}(nD)|_{nD}} & 0 
	\arrow[from=1-1, to=1-2]
	\arrow[from=1-2, to=1-3]
	\arrow["\imath_2",from=1-3, to=1-4]
	\arrow[from=1-4, to=1-5]
	\arrow[from=2-1, to=2-2]
	\arrow["d"', from=2-2, to=1-2]
	\arrow[from=2-2, to=2-3]
	\arrow["d"', from=2-3, to=1-3]
	\arrow["\imath_1",from=2-3, to=2-4]
	\arrow["d"',from=2-4, to=1-4]
	\arrow[from=2-4, to=2-5]
\end{tikzcd}\]
\caption{The Pole-Order Resolution $\cD(n)$ of the de Rham Complex.}
\label{fig:acyclic resolution}
\end{figure}

By our choice of $n$, it is standard that the resolution is acyclic using Serre duality when working over a field.  This continues to hold when $S=\Spec \Zq$, and furthermore the global sections of the sheaves involved are free, as explained in \cite[Ch. 3, 3.1. Lemma]{bogaart}.  The same argument works for the spectrum of any discrete valuation ring (and in fact even more generally).  Thus we extend \cite[Theorem 2.6]{bgkm}:

\begin{theorem}
Let $S$ be the spectrum of a field or a DVR.  Then $\HdR^1(C/S)$ is isomorphic to the first homology of the total complex of global sections of the pole-order resolution in Figure~\ref{fig:acyclic resolution}.
\end{theorem}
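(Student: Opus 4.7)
The plan is to verify that the double complex $\cD(n)$ in Figure~\ref{fig:acyclic resolution} is a quasi-isomorphic resolution of the de Rham complex $\cO_{C/S} \to \Omega^1_{C/S}$ by acyclic sheaves, and then invoke the standard hypercohomology formalism: if a complex admits a termwise acyclic resolution, its hypercohomology is computed by the total complex of global sections. The claim then reduces to two sheaf-theoretic checks—the columns of $\cD(n)$ are resolutions and the terms in the resolution (other than the original de Rham complex itself) are acyclic—plus the observation that the hypercohomology spectral sequence degenerates because the relevant terms live in a single column.

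First I would verify the column exactness. The two horizontal rows of $\cD(n)$ are the standard short exact sequences of sheaves
\[
0 \to \cO_{C/S} \to \cO_{C/S}(nD) \to \cO_{C/S}(nD)|_{nD} \to 0,
\]
and the same with $\Omega^1_{C/S}$ twisting and $(n+1)D$ in place of $nD$; these are exact at every stalk on $C$. So each row, read as a two-term resolution, is exact.

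Next I would establish acyclicity of all the non-initial terms and freeness of their global sections. The rightmost column is a pair of skyscraper sheaves on a finite subscheme of $C$, which is affine over $S$, so they have no higher cohomology and their global sections are $\cO_S$-free. The substantive part is the middle column: I need $H^1(C, \cO_{C/S}(nD)) = 0$ and $H^1(C, \Omega^1_{C/S}((n+1)D)) = 0$, together with freeness of the $H^0$'s when $S$ is the spectrum of a DVR $R$. Over a field, both vanishings are immediate from Serre duality combined with the hypothesis $\deg(nD) > \max(2g(C)-2, 0)$, which forces the Serre-dual groups to have negative degree. For $S = \Spec R$ a DVR, I would apply this field-level conclusion to both the generic and special fibers of $C \to S$, then invoke cohomology and base change: vanishing of $H^1$ on every fiber of a proper flat morphism between Noetherian schemes implies vanishing of $H^1$ globally and freeness of $H^0$, with formation commuting with arbitrary base change. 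This is exactly the argument recorded in \cite[Ch.~3, 3.1.~Lemma]{bogaart}.

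With the resolution in place, the hypercohomology $\HdR^1(C/S)$ equals the first cohomology of the total complex of global sections of $\cD(n)$; the degree-one piece of that total complex is precisely the module of enhanced differentials of the second kind truncated by pole order along $D$, modulo the image of $d_C$, matching the formulation in the statement. The main obstacle I would expect is the DVR step: one has to confirm carefully that the hypotheses of the base-change statement apply (properness and flatness of $C \to S$, coherence of the sheaves involved, and that fiberwise $H^1 = 0$ in both fibers), since these details are what genuinely go beyond the field case \cite[Theorem 2.6]{bgkm}. Everything else is bookkeeping with standard short exact sequences.
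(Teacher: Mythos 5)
Your proposal is correct and follows the same route as the paper: acyclicity of the non-initial terms of the pole-order resolution via Serre duality and the degree hypothesis on $nD$ over a field, extension to the DVR case by the fiberwise argument with cohomology and base change as in \cite[Ch.~3, 3.1.~Lemma]{bogaart}, and then the standard fact that hypercohomology is computed by the total complex of global sections of a termwise acyclic resolution. (Minor point: the exact sequences you check are the rows of Figure~\ref{fig:acyclic resolution}, not the columns, but your argument is the intended one.)
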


To make $\cO_{C/S}(nD)|_{nD}$ more concrete, note that $\cO_{C/S}(nD)|_{nD} \cong \cO_{nD}(nD)$ is a skyscraper sheaf supported on $D$. 
Given a point $P$ in the support of $D$ with multiplicity $m_P$, let $A_P$ denote the local ring at the special point of $P$, $R_P$ be the residue ring, and $t_P$ a uniformizer at $P$ (a generator of the ideal sheaf $\cO_{C/S}(-P)$ in a neighborhood of $P$).  By smoothness, we know that the completion of $A_P$ is isomorphic to $R_P\powerseries{t_P}$.  Then the stalk of $\cO_{C/S}(nD)|_{nD}$ at $P$ is $t_P^{-n m_P} A_P / A_P \cong t_P^{-n m_P} R_P\powerseries{t_P} / R_P\powerseries{t_P}$.  Thus the map $\imath_1: \cO_{C/S}(nD) \to \cO_{C/S}(nD)|_{nD}$ records the 
``Laurent series tails'' of rational functions at the points in the support of $D$.  These are the $(f_P)_{P \in D}$ appearing in the notion of enhanced differentials of the second kind. 
Similarly the map $\imath_2: \Omega^1_{C/S}((n+1)D) \to \Omega^1_{C/S}((n+1)D)|_{(n+1)D}$ consists of taking ``Laurent series tails'' of expansions at each $P \in D$.

\begin{notation}
Let $S_{nD}$ (resp. $S'_{(n+1)D}$) denote the global sections of $\cO_{C/S}(nD)|_{nD}$ (resp. $\Omega^1_{C/S}((n+1)D)|_{(n+1)D}$), so:
\begin{align*}
S_{nD} & \cong \prod_{P \in \Supp(D)} \left( t_P^{-n m_P} R_P\powerseries{t_P} \right)/ R_P\powerseries{t_P} \\
S'_{(n+1)D} & \cong \prod_{P \in \Supp(D)} \left( t_P^{-(n+1) m_P} R_P\powerseries{t_P} \right)/ R_P\powerseries{t_P}  \cdot dt_P.
\end{align*}
\end{notation}

\subsection{Computing with Enhanced Differentials of the Second Kind}
We now present Algorithm~\ref{alg2}, an idealized algorithm for computing a basis for $\HdR^1(C/S)$ using the total complex %
\begin{equation} \label{eq:totalcomplex}
H^0(C, \cO_{C/S}(nD)) \overset{d \oplus \imath_1} \to H^0(C,\Omega^1_{C/S}((n+1)D)) \oplus S_{nD} \overset{d - \imath_2} \to S'_{(n+1)D}.
\end{equation}

\begin{algorithm} 
\caption{Idealized Algorithm for de Rham Cohomology}
\label{alg2}
\begin{algorithmic}[1]
    \STATE \label{step1} Compute bases for $H^0(C,\Omega^1_{C/S}((n+1)D))$ and $H^0(C,\cO_{C/S}(nD))$.
    \STATE \label{step2} Compute the uniformizers $t_P$ at each $P \in \Supp(D)$. 
       \STATE \label{step3} Compute the images of basis elements %
under $\imath_1$ and $\imath_2$ by computing local expansions.
\STATE  \label{step4} Compute $\ker (d - \imath_2)$ and $\Image(d \oplus \imath_1)$.
\STATE  \label{step5} Compute a basis for the quotient $\ker (d - \imath_2)/\Image(d \oplus \imath_1)$, which is isomorphic to $\HdR^1(C/S)$.
\end{algorithmic}
\end{algorithm}

If $S$ is the spectrum of a field, the first three steps of the algorithm (computing Riemann Roch spaces and spaces of differentials, computing uniformizers, and computing local expansions) are well studied problems and efficient algorithms have been implemented (at least for exact coefficient fields).  The fourth and fifth steps then can be easily solved using linear algebra.  However, there are two complications with making this work over $\Zq$ in order to compute $\HdR^1(\Xlift/\Zq)$:
\begin{itemize}
\item  $\Zq$ is not a field and, to the best of our knowledge, algorithms for the geometric problems in the first three steps have only been studied over fields;
\item  $\Zq$ (and $\Qq$) are not exact, so any implementation would have to correctly track $p$-adic precision.
\end{itemize}

When implementing the algorithm, we work over a number field.  This is no loss of generality: we need a model of $\Xlift$ over a number field anyway to use Tuitman's work \cite{pcc}.  This choice allows us to use Magma's function field machinery over the field of fractions which does not work over $p$-adic fields.  The choice also removes the need to perform linear algebra over $\Zq$ and track $p$-adic precision.  We will discuss the implementation in Section~\ref{ss:implementation}: it gives a practical (but not theoretically pleasing) solution to both complications.

For the rest of this subsection, we focus on analyzing a portion of the idealized algorithm treating the first two steps as a black box.  The geometric questions for curves over a DVR that we ignore are interesting but beyond the scope of this work.

We temporarily let $\cO \subset \Zq$ denote the ring of coefficients we will work with: for a theoretical analysis we take $\cO = \Zq$ but the implementation will take $\cO$ to be the ring of integers in a number field.  Let $K$ be the field of fractions.

\begin{notation}
Viewing $\Xlift$ as a cover of $\PP^1$ via projecting onto the $x$-coordinate, let $D = \sum_{i=1}^t n_i P_i$ be the divisor above infinity (over $\cO$).
Let 
\[
s = \rank_{\Zq} H^0(\Xlift,\cO_{\Xlift}(nD)) \quad \text{and}\quad s' = \rank_{\Zq} H^0(\Xlift,\Omega^1_{\Xlift}((n+1)D)).
\]  
\end{notation}

\begin{remark} \label{remark:growth}
Recall that the genus of $\Xlift$ is $O(d_x d_y)$ \cite[Proposition 4.1]{TuitmanII} so that $\deg(nD)$ can also be chosen to be $O(d_x d_y)$.  
We therefore conclude that $s$ and $s'$ are $O(d_x d_y)$ by Riemann-Roch.  Finally, notice that   $t$ is at most $d_y$ (the degree of the projection map).
\end{remark}

\begin{theorem} \label{theorem:computingderham}
Suppose we are given:
\begin{itemize}
\item  differentials $\omega_1,\ldots, \omega_{s'}$ (resp. functions $f_1,\ldots,f_{s}$) defined over $\cO$ forming an $\Zq$-basis for $H^0(\Xlift,\Omega^1_{\Xlift}((n+1)D))$ (resp. $H^0(\Xlift,\cO_{\Xlift}(nD))$);

\item  functions $u_1, \ldots, u_t$ on $\Xlift$ defined over $\cO$, one of which is a uniformizer at each element of $\Supp(D)$.
\end{itemize}
Then we may compute a $\Zq$-basis of $\HdR^1(\Xlift)$ where the number of operations in the coefficient ring $\cO$ is polynomial in $d_x$ and $d_y$.
\end{theorem}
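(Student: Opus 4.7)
The plan is to carry out Steps \ref{step3}--\ref{step5} of Algorithm~\ref{alg2}, using the hypotheses to dispatch Steps \ref{step1}--\ref{step2}. The combinatorial invariants to keep in mind are $s, s' = O(d_x d_y)$, $\deg((n+1)D) = O(d_x d_y)$, and $t \leq d_y$, all inherited from Remark~\ref{remark:growth}; we will verify that every auxiliary object below has size $O(\poly(d_x,d_y))$ and is assembled using $O(\poly(d_x,d_y))$ operations in $\cO$.

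First, for each $P_i \in \Supp(D)$ I would use the given uniformizer $u_k$ from among $u_1,\ldots,u_t$ to compute truncated Laurent expansions of every $f_j$ up to order $n m_i$ and every $\omega_j$ up to order $(n+1) m_i$. These expansions, assembled across the $t$ points in the support, yield the images under $\imath_1 \colon H^0(\cO_{\Xlift}(nD)) \to S_{nD}$ and $\imath_2 \colon H^0(\Omega^1_{\Xlift}((n+1)D)) \to S'_{(n+1)D}$. Since the total number of recorded coefficients per basis element is at most $\deg((n+1)D) = O(d_x d_y)$, the two matrices representing $\imath_1$ and $\imath_2$ on the given bases have size polynomial in $d_x$ and $d_y$. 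Next, I would compute $df_j$ as a rational differential on $\Xlift$ for each $j$, and express it as an $\cO$-linear combination of $\omega_1,\ldots,\omega_{s'}$: this is $s$ linear systems of size $s'$, hence again polynomial in $d_x$ and $d_y$. Combining these gives matrix representations of the $\cO$-module maps
\[
d \oplus \imath_1 \colon H^0(\cO_{\Xlift}(nD)) \to H^0(\Omega^1_{\Xlift}((n+1)D)) \oplus S_{nD}, \quad d - \imath_2 \colon H^0(\Omega^1_{\Xlift}((n+1)D)) \oplus S_{nD} \to S'_{(n+1)D}.
\]

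With the total complex \eqref{eq:totalcomplex} in hand, Steps \ref{step4} and \ref{step5} reduce to computing the kernel of one $\cO$-linear map, the image of another, and a basis for the quotient, all between free modules of rank $O(d_x d_y)$. Over the principal ideal ring $\Zq$ (or more generally a Dedekind domain when $\cO \subsetneq \Zq$) this can be done via Smith or Hermite normal form; the number of operations in $\cO$ is polynomial in the matrix dimensions, hence polynomial in $d_x$ and $d_y$. By Fact~\ref{fact:differentialssecondkind} and the pole-order resolution of Figure~\ref{fig:acyclic resolution}, the resulting basis is a $\Zq$-basis of $\HdR^1(\Xlift)$, as desired.

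The main obstacle is the power-series arithmetic in Step \ref{step3}, that is, computing the local expansions of the $f_j$ and $\omega_j$ in the chosen uniformizer. At a point $P_i$ where $x$ is unramified this amounts to a Taylor series in $u_k$, but at a ramified point one must first express the other generator $y$ of the function field as a power series in $u_k$ using Hensel iteration on the defining equation $\Qlift(x,y)=0$, and then substitute into the $f_j$ and $\omega_j$ and apply the chain rule. Each coefficient of the series is produced by a fixed number of arithmetic operations in $\cO$, so the cost is $O(\poly(d_x,d_y))$ operations per basis element per point; bundling across the $t = O(d_y)$ points and $O(d_x d_y)$ basis elements keeps the total in the required polynomial range.
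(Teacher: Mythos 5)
Your proposal is correct and follows essentially the same route as the paper: compute local expansions at the points of $\Supp(D)$ to get matrices for $\imath_1$ and $\imath_2$, differentiate to get $d$, and then use Hermite/Smith normal form over $\cO$ for the kernel, image, and quotient, with the same $O(d_x d_y)$ dimension counts driving the complexity bound. The only point the paper makes explicit that you pass over quickly is that the expansion coefficients live in the residue rings of the $P_i$ (finite extensions of $\cO$) and must be checked to be integral --- which follows from the hypothesis that one of the $u_k$ is a uniformizer over $\cO$ --- possibly after rescaling the bases, so that the subsequent linear algebra genuinely takes place over $\cO$ and produces a $\Zq$-basis rather than merely a $\Qq$-basis.
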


\begin{proof}
We slightly modify steps \ref{step3}-\ref{step5} of Algorithm~\ref{alg2}.

For the third step, we compute local expansions over $K$, since the functions, differentials, and uniformizers are defined over $\cO$.  The coefficients of the local expansions will lie in the residue fields of $P_1,\ldots,P_t$ which are extensions of $K$.  %
As one of the $u_1, \ldots, u_t$ is a uniformizer at each element of $\Supp(D)$, we conclude that the coefficients of the local expansions are integral over $\cO$ (i.e. the coefficients of the expansion lie in the residue ring of the element of $\Supp(D)$, which is a finite extension of $\cO$).  Therefore the denominators of the coefficients are prime to $p$.  After rescaling the chosen bases, we therefore may assume the coefficients are integral over $\cO$ without changing the $\Zq$-span.

Using the rescaled bases of the spaces of functions and differentials, and the natural bases for $S_{nD}$ and $S'_{(n+1)D}$ given by powers of the uniformizers, we may therefore write down matrices which represent $\imath_1$ and $\imath_2$.  %
It is straightforward to write down a similar matrix for $d$ by differentiating.  

Using this information, the computation of bases for $\ker(d - \imath_2)$ and $\Image ( d \oplus \imath_1)$ in the fourth step is then a computation using Hermite normal form over $\cO$. Finally, computing the quotient in step~\ref{step5} can be done using Smith normal form.
See Cohen's textbook for a discussion of Hermite and Smith normal forms over Dedekind domains \cite[\S1.3,1.4,1.5]{cohenadvanced} and their connection with the computations of bases for kernels and quotients.

It remains to analyze the complexity and show the number of operations performed is polynomial in $d_x$ and $d_y$.  
At each of the $t$ elements $P \in \Supp(D)$, we must compute at most $(n+1) \ord_P(D)$-terms in the local expansion at $P$.  As $\deg(nD)$ is $O(d_x d_y)$, so are $\dim S'_{(n+1)D}$ and $\dim S_{nD}$ and hence so is the total number of terms in the local expansions we must compute.  Thus a polynomial number of operations in $\cO$ suffice to compute matrices representing $\imath_1$ and $\imath_2$.

For the computations of the Hermite and Smith normal forms, even the simplest versions of the algorithms only require a polynomial number of operations in the dimensions of the matrices, which as seen above are $O(d_x d_y)$.  
\end{proof}

\begin{remark}
As is common in algorithms based on linear algebra, we analyze the complexity based on the number of operations in $\cO$.  This can be somewhat misleading when working over an exact coefficient ring as the computation can easily introduce ``coefficient explosion'' in intermediate steps.  In that case, the running time would not actually be polynomial in the size of the inputs.  More sophisticated versions of the algorithms for computing the normal forms (also studied in \cite{cohenadvanced}) take this into account.

The idealized version of the algorithm works over $\Zq$, so in some sense coefficient explosion is not an issue.  However, computations over $\Zq$ inherently require approximation so this comes at the expense of having to track precision.
\end{remark}

\begin{remark}
If $K$ is a number field of degree $r$ over $\QQ$, then a single operation in $\cO$ requires a number of operations with integers which is polynomial in $r$.  If $K = \Qq$, then a single operation in $\cO=\Zq$ performed with precision $N$ requires a polynomial in $\log(q) N = \log(p) r N$ operations.
\end{remark}

\subsection{Implementation and Assumptions} \label{ss:implementation}

We conclude by describing a practical variant of Algorithm~\ref{alg2}.  

\begin{notation}
Let $\cO$ be the ring of integers of the number field $K$, the ring over which $\Xlift$ is defined.
Fix a place
$\fp$ above $p$ such that the complete local ring of $\cO$ at $\fp$ is isomorphic to $\Zq$.  Without loss of generality, we may assume that $[\F_q : \Fp] = r = [K:\Q]$, i.e. that $p$ is inert.
\end{notation}

\begin{remark}
The support of the divisor $D$ over infinity may look different when working over $\cO$ versus $\Zq$, as points may split when passing to the completion.
\end{remark}

In practice, we implement the linear algebra in Theorem~\ref{theorem:computingderham} over $\cO$ with a few small modifications.  We pick a $\Z$-basis for $\cO$ (and similarly for the residue rings of elements in the support of $D$) which reduces all of the Hermite and Smith normal form computations over the Dedekind domain $\cO$ to computations over $\Z$.  In fact, we can do everything using Magma's functionality for free $\Z$-modules without explicitly referencing the normal forms.

The main challenge is producing the functions, differentials, and uniformizers for use in Theorem~\ref{theorem:computingderham}.  The pragmatic answer is to simply perform computations over the number field $K$: the results will almost always work over $\Zq$ in practice.

Correctness can easily be verified.  To check that a set of functions (resp. differentials) defined over $K$ are a basis for the $\Zq$-module, the first requirement is that we can reduce them modulo $p$.  If we can, then using Nakayama's lemma we simply verify they form a basis for the corresponding space modulo $p$. 
Similarly, we can compute a uniformizer over $K$ and check its behavior in the special fiber.  But we do not have a guarantee this will always work.  

\begin{assumption} \label{assumption:funcdif}
We assume we are given differentials $\omega_1,\ldots, \omega_{s'}$ and functions $f_1,\ldots,f_{s}$, and uniformizers $u_1,\ldots,u_t$ satisfying the hypotheses of Theorem~\ref{theorem:computingderham}, for example those usually produced by Magma working over the number field $K$.
\end{assumption}

\begin{remark}
The algorithm for computing Riemann-Roch spaces implemented in Magma is based on \cite{hess02}, and requires working over a field.  Hess's algorithm involves computing integral closures in function fields. In order to use Tuitman's algorithm, we have already had to assume Assumption~\ref{assumption:nicelift}, one piece of which concerns having bases over $\Zq$ giving integral bases in the generic and special fibers.  Recalling Remark~\ref{remark:overnumfield}, we are ``usually'' able to find the required bases by working over $K$.  Thus Assumption~\ref{assumption:funcdif} is an extension of the preexisting assumption: integral bases for the function fields over $K$ and $\F_q$ behave similarly so the results of Hess's algorithm over $K$ give a $\Zq$-basis for the Riemann-Roch space.
\end{remark}

\begin{remark} \label{remark:RRpoltime}
The functions and differentials needed in Theorem~\ref{theorem:computingderham} can be computed in polynomial time when working over $K$.  The exact complexity exponent is not completely clear.  To the best of our knowledge it has not been analyzed for Hess's algorithm, which is what we are making use of in Magma.
\end{remark}

We illustrate Assumption~\ref{assumption:funcdif} for Artin-Schreier curves, showing it is reasonable.  
We look at $X$ given by $y^p - y = f(x)$, where $f(x) \in \F_q[x]$ is a polynomial of degree $d$ with $p \nmid d$.  We may take $Q(x,y) = y^p - y - f(x)$ as the plane curve model.  Suppose we are given $\Qlift = y^p -y - \widetilde{f}(x)$ and $\Xlift$ as in Section~\ref{sec:situation}.  

As always, we project onto the $x$-coordinate. We know the special fiber is (wildly) totally ramified only over infinity, and has genus $(p-1)(d-1)/2$.  The generic fiber also has genus $g=(p-1)(d-1)/2$, and is (tamely) totally ramified over infinity.  It also has
$(p-1)d$ simple branch points, the geometric points on $\Xgen$ where $py^{p-1}-1=0$.  

\begin{notation}
Let $P_\infty$ be the unique point at infinity, viewed as a $\Zq$-point.  For integers $i,j$ with $0 \leq j \leq p-1$ we let
\[
f_{i,j} \colonequals x^i y^j, \quad  \omegazero \colonequals \frac{dx}{1-py^{p-1}}, \quad \text{and} \quad  \omega_{i,j} \colonequals f_{i,j} \omegazero.
\]
\end{notation}

\begin{lemma} \label{lemma:ordersofvanishing}
For integers $i,j$ with $0 \leq j \leq p-1$, we have
\[
\ord_{P_\infty} (f_{i,j}) = -p i - dj \quad \text{and} \quad \ord_{P_\infty} \left(\omega_{i,j}\right) = 2g-2-pi - dj .
\]
These have poles at points above $0$ if and only if $i <0$, and are otherwise regular.
\end{lemma}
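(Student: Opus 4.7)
The plan is to first compute orders at $P_\infty$ using the total ramification of the $x$-projection there, and then verify that $\omegazero$ is regular at every finite point so that the pole behavior of $\omega_{i,j}$ above $x=0$ reduces to that of $f_{i,j}$.

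Since the cover $x : \Xgen \to \PP^1_{\Qq}$ has degree $p$ and is totally (tamely) ramified at $P_\infty$, we have $\ord_{P_\infty}(x) = -p$. Using $y^p - y = \widetilde{f}(x)$ with $\deg \widetilde{f} = d$, the term $y^p$ must dominate $-y$ at $P_\infty$ (since $\ord_{P_\infty}(y)<0$), so matching $\ord_{P_\infty}(y^p) = p \cdot \ord_{P_\infty}(y) = -pd$ gives $\ord_{P_\infty}(y) = -d$. Immediately $\ord_{P_\infty}(f_{i,j}) = -pi - dj$.

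For the differential, I differentiate the defining equation to obtain $(py^{p-1}-1)\,dy = \widetilde{f}'(x)\,dx$, so
\[
\omegazero \;=\; \frac{dx}{1-py^{p-1}} \;=\; -\frac{dy}{\widetilde{f}'(x)}.
\]
Using the second expression, $\ord_{P_\infty}(\widetilde{f}'(x)) = -p(d-1)$, and $\ord_{P_\infty}(dy) = \ord_{P_\infty}(y) - 1 = -d-1$ (the leading coefficient of $dy$ does not vanish in either fiber because $p \nmid d$). Hence $\ord_{P_\infty}(\omegazero) = p(d-1) - d - 1 = (p-1)(d-1) - 2 = 2g-2$, and so $\ord_{P_\infty}(\omega_{i,j}) = 2g - 2 - pi - dj$.

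For the claim about finite points: since $\Qlift$ is monic in $y$, the element $y$ is integral over $\Zq[x]$, so both $x$ and $y$ are regular at every finite point of $\Xlift$. Thus $f_{i,j} = x^i y^j$ is regular at finite points when $i \geq 0$ and has poles precisely above $x = 0$ when $i < 0$. It remains to confirm $\omegazero$ is regular at every finite point. Where $1-py^{p-1}$ is a unit, the formula $\omegazero = dx/(1-py^{p-1})$ gives this immediately. At a branch point $P_0 = (x_0, y_0)$ with $y_0^{p-1} = 1/p$, Taylor expanding $y^p - y = \widetilde{f}(x)$ around $P_0$ (using $py_0^{p-1} = 1$ to kill the linear-in-$(y-y_0)$ term) yields
\[
x - x_0 \;=\; \binom{p}{2}\, y_0^{p-2}\, \widetilde{f}'(x_0)^{-1} (y-y_0)^2 + O\bigl((y-y_0)^3\bigr),
\]
so $y - y_0$ is a local uniformizer at $P_0$ and both $dx$ and $1-py^{p-1}$ vanish to order exactly $1$; their ratio $\omegazero$ is thus a unit differential at $P_0$. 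Consequently $\omega_{i,j}$ has poles above $x=0$ iff $i < 0$.

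The main obstacle is the local analysis at the simple branch points: verifying that the simple zero of $1-py^{p-1}$ in the denominator of $\omegazero$ is cancelled exactly by the simple zero of $dx$ coming from the ramification of index $2$, which relies on $\widetilde{f}'(x_0) \neq 0$ at each branch point in the generic fiber.
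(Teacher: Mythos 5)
Your proof is correct and follows essentially the same route as the paper, which merely asserts the orders of $x$, $y$, and $\omegazero$ at $P_\infty$ and the regularity of $\omegazero$ as ``elementary observations''; your identity $\omegazero = -dy/\widetilde{f}'(x)$ together with the local analysis at the branch points supplies the details the paper omits. The one input you flag, $\widetilde{f}'(x_0)\neq 0$ at each branch point, is already part of the paper's standing setup (the assertion that $\Xgen$ has $(p-1)d$ simple branch points) and in any case holds automatically: since $p\nmid d$ the roots of $\widetilde{f}'$ are integral over $\Zq$, so the critical values of $\widetilde{f}$ have nonnegative valuation, whereas $y_0^p-y_0=y_0(1-p)/p$ has negative valuation whenever $py_0^{p-1}=1$, so no branch point can lie over a critical point of $\widetilde{f}$.
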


\begin{proof}
We just need a few elementary observations.
\begin{enumerate}
\item  %
The function $x$ has a pole of order $p$ at $P_\infty$ and is otherwise regular.  

\item  The function $y$ has a pole of order $d$ at $P_\infty$, and is otherwise regular. %

\item  The differential $\omegazero$
is regular and only has a zero of order $2g-2 = (p-1)(d-1)-2$ at $P_\infty$.  
\end{enumerate}
The result is now clear.
\end{proof}

\begin{remark}
Note that the differential $dx$ over $\Zq$ is not as well behaved.  In the generic fiber, it has simple zeros at the $(p-1)d$ geometric branch points and a pole of order $p+1$ at $P_\infty$.  In the special fiber, it has a zero of order 
$2g-2 = (p-1)(d-1)-2$ at $P_\infty$ due to the wild ramification in characteristic $p$.
\end{remark}

We take $n=(2g-1)$ and $D = [P_\infty]$ and investigate Assumption~\ref{assumption:funcdif}.

\begin{proposition} \label{prop:artinschreierassumptions}
A $\Zq$-basis for $H^0(\Xlift,\cO_{\Xlift}(nD))$ is
\[
\Bone = \left \{ f_{i,j} \colonequals x^i y^j : 0 \leq i , \, 0 \leq j \leq p-1, \, pi + dj \leq (p-1)(d-1)-1 \right \}.
\]
A $\Zq$-basis for $H^0(\Xlift,\Omega^1_{\Xlift}((n+1)D))$ is 
\[
\Btwo = \left \{ \omega_{i,j} \colonequals x^i y^j \frac{dx}{1-py^{p-1}}: 0 \leq i, \, 0 \leq j \leq p-1, \, pi+dj \leq 2 (p-1)(d-1) -2 \right \}.
\]
Pick integers $a,b$ such that $pa+db=-1$: then $u=x^a y^b$ is a uniformizer at $P_\infty$.
\end{proposition}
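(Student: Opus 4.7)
The plan is to separate the geometric (pole-order) content from the module-theoretic (freeness) content. First, I would use Lemma~\ref{lemma:ordersofvanishing} to verify that each $f_{i,j} \in \Bone$ lies in $H^0(\Xlift, \cO_{\Xlift}(nD))$ and each $\omega_{i,j} \in \Btwo$ lies in $H^0(\Xlift, \Omega^1_{\Xlift}((n+1)D))$: the inequalities $pi+dj \le (p-1)(d-1)-1$ and $pi+dj \le 2(p-1)(d-1)-2$ are arranged so that the pole at $P_\infty$ exactly fits the allowed order, and for $i \ge 0$ there is no pole above the affine part of $\PP^1$. For the uniformizer, Lemma~\ref{lemma:ordersofvanishing} immediately yields $\ord_{P_\infty}(x^a y^b) = -(pa+db) = 1$; since this order calculation is valid in both fibers, $u$ generates the maximal ideal of the relative local ring at $P_\infty$.

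To upgrade these sets to $\Zq$-bases I would use cohomology and base change followed by Nakayama's lemma. In the special fiber, $\deg(nD) = 2g-1 > 2g-2$ gives $H^1(X, \cO_X(nD)) = 0$, and Serre duality gives $H^1(X, \Omega^1_X((n+1)D)) \cong H^0(X, \cO_X(-(n+1)D))^\vee = 0$. By cohomology and base change, both $H^0$'s are thus free $\Zq$-modules whose formation commutes with reduction modulo $p$; Riemann-Roch identifies the ranks as $g$ and $3g-1$ respectively.

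The key combinatorial step is matching cardinalities. Since $\gcd(p,d) = 1$, the map $(i,j) \mapsto pi+dj$ on the set $\{(i,j) : i \ge 0,\, 0 \le j \le p-1\}$ is a bijection onto the numerical semigroup $S = \langle p, d \rangle$. Its Frobenius number is $pd - p - d = 2g-1$, and there are exactly $g$ gaps in $\{0, 1, \ldots, 2g-1\}$. This yields $|\Bone| = |S \cap [0, 2g-1]| = g$ and $|\Btwo| = |S \cap [0, 4g-2]| = g + (4g - 1 - 2g) = 3g-1$. Finally, reducing modulo $p$ makes $\omegazero$ equal to $dx$, so the elements of $\Bone$ and $\Btwo$ reduce to $x^i y^j$ and $x^i y^j\, dx$ respectively; linear independence in the special fiber follows because $1, y, \ldots, y^{p-1}$ is a basis of $\F_q(X)$ over $\F_q(x)$. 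Nakayama then lifts this to $\Zq$.

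The main obstacle is the coordination across fibers: one must confirm that $\omegazero$ degenerates cleanly (so that the pole-order data of Lemma~\ref{lemma:ordersofvanishing} is consistent in both fibers), that $P_\infty$ remains a single smooth point with the same uniformizer arithmetic, and that $H^1$-vanishing in the special fiber propagates to give the needed freeness over $\Zq$. Once this fiber compatibility is in hand, the result is driven by Riemann-Roch, the Frobenius-number count for $\langle p, d \rangle$, and the explicit structure of the degree-$p$ Artin-Schreier extension.
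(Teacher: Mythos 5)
Your proposal is correct and follows essentially the same route as the paper: verify membership and the uniformizer claim via Lemma~\ref{lemma:ordersofvanishing}, establish freeness and compatibility with reduction mod $p$ from $H^1$-vanishing, check the elements form a basis of the special fiber, and conclude by Nakayama. The only (harmless) differences are in the details you fill in — you count $|\Bone|=g$ and $|\Btwo|=3g-1$ explicitly via the Frobenius number of the semigroup $\langle p,d\rangle$ and argue independence mod $p$ using the basis $1,y,\ldots,y^{p-1}$ over $\F_q(x)$, whereas the paper gets independence from the distinct orders of vanishing at $P_\infty$.
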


\begin{proof}
Both $\Zq$-modules are free of known dimension as explained in \cite[Ch. 3, 3.1. Lemma]{bogaart}. Lemma~\ref{lemma:ordersofvanishing} shows the specified elements lie in the appropriate spaces.  They are linearly independent as each has distinct order of vanishing at the point at infinity, and reducing modulo $p$ certainly form a basis for the analogous spaces in the special fiber.  Thus they form a basis by Nakayama's lemma.

Lemma~\ref{lemma:ordersofvanishing} also shows that $u$ is a uniformizer at $P_\infty$: such $a,b$ exist since $\gcd(d,p)=1$.
\end{proof}

\begin{example}
Consider the case that $p=7, d=3$, and $\Qlift(x,y) = y^7-y - x^3$.  Thus $n = (d-1)(p-1) -1 =11$.  Working over $\Q$, the algorithm in Magma produces a basis 
\[
y^3,\, y^2,\, xy,\, y,\, x,\, 1
\]
for $H^0(\Xgen,\cO_{\Xgen}(nP))$, which is the natural one occurring in Proposition~\ref{prop:artinschreierassumptions}.  The basis for $H^0(\Xgen,\Omega^1_{\Xgen}((n+1)D)$ computed by Magma initially looks much more complicated when written as a multiple of $dx$, but we see they are monomial multiples of $\omegazero$ as in Proposition~\ref{prop:artinschreierassumptions}. 
\end{example}

\begin{remark}
We will briefly comment on how Proposition~\ref{prop:artinschreierassumptions} could generalize to arbitrary plane curves.  There are alternate approaches to computing Riemann-Roch spaces which rely on some form of Brill-Noether theory and use the notion of an adjoint curve or adjoint divisor.  See \cite{abcl22} for a recent example, and Section 1.3 of loc. cit. for references to this strand of inquiry.

For example, for a smooth projective curve $X$ with plane curve model given by $Q(x,y)=0$, a natural basis for the space of regular differentials is given by
\[
x^i y^j \frac{dx}{Q_y}
\]
where $Q_y$ denotes a partial derivative with respect to $y$.  The allowable indices $i$ and $j$ depend on the singularities of the plane model in the form of the adjoint curve, and can be modified to allow constrained poles above infinity.  This is used, for example, in \cite{SV} to compute a matrix representation of the Cartier operator on the space of regular differentials.  Note that $Q_y = p y^{p-1}-1$ in the case of an Artin-Schreier curve, so this matches the basis $\Btwo$ in Proposition~\ref{prop:artinschreierassumptions}.  One can imagine generalizing that proposition and implementing an algorithm for computing Riemann-Roch spaces over $\Zq$ by studying the adjoint curve and extending this approach to work in the relative setting.  But this is beyond the scope of this work.
\end{remark}

We also work out a basis for the crystalline cohomology of Artin-Schreier curves.  It is of independent interest and used in a test case for our implementation.  We use enhanced differentials of the second kind supported on $P_\infty$, i.e. coming from the total complex of Equation~\ref{eq:totalcomplex} with $D = [P_\infty]$ and $n=(2g-1)$.  Thus we only indicate the $P_\infty$ component in Definition~\ref{defn:secondkind}.  This generalizes \cite[Lemma 3.1]{SZ03} (a basis for $H^0(\Xlift,\Omega^1_{\Xlift})$) and \cite[Proposition 2.3]{Zhou21} (a basis for $\HdR^1(X/\F_q)$).  These use the \'{C}ech resolution, but can be converted using \cite[\S2.3]{bgkm}. 

\begin{theorem}
Let $h_{i,j} = (p-1-j)x \widetilde{f}'(x) - (i+1) p \widetilde{f}(x)$, and $h_{i,j}^{\geq i+2}(x)$ denote the terms of $h_{i,j}$ where the degree of $x$ is at least $i+2$.  We set
\[
\psi_{i,j} \colonequals -\frac{h_{i,j}^{\geq i+2}(x) y^{p-2-j}}{x^{i+2}} \frac{dx}{1 - py^{p-1}}.
\]
Then a basis for $\Hcrys^1(X) \cong \HdR^1(\Xlift)$ is 
\[
\left\{ (\omega_{i,j},0), (\psi_{i,j},f_{-(i+1),p-1-j} ) : 0 \leq i,j, \quad pi +dj \leq 2g-2 \,  \right\}.
\]
\end{theorem}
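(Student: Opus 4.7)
The plan is to combine Fact~\ref{fact:crystallineisomorphisms} (which identifies $\Hcrys^1(X)$ with $\HdR^1(\Xlift)$) with Fact~\ref{fact:differentialssecondkind}, using the total complex~\eqref{eq:totalcomplex} built from the pole-order resolution of Figure~\ref{fig:acyclic resolution} with $D = [P_\infty]$ and $n = 2g-1$.  Since $\HdR^1(\Xlift)$ is free of rank $2g$ over $\Zq$, by Nakayama's lemma it suffices to show that the reductions modulo $p$ of the $2g$ listed classes are linearly independent in $\HdR^1(X)$.  A quick count---using that the representation $m = pi + dj$ with $0 \leq j \leq p-1$ is unique and the Sylvester--Frobenius count of representable non-negative integers---confirms that the indexing set has exactly $g$ elements, so the theorem lists $2g$ candidates (which matches $\rank \HdR^1(\Xlift)$).

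Verifying each $(\omega_{i,j}, 0)$ lies in $\ker(d - \imath_2)$ is immediate: the constraint $pi + dj \leq 2g-2$ combined with Lemma~\ref{lemma:ordersofvanishing} forces $\ord_{P_\infty}(\omega_{i,j}) \geq 0$, so $\omega_{i,j}$ is regular and $\imath_2(\omega_{i,j}) = 0$.  The main computation is to verify $\imath_2(\psi_{i,j}) = d(f_{-(i+1),\,p-1-j})$ in $S'_{(n+1)D}$.  For this, I would compute $d(x^{-(i+1)} y^{p-1-j})$ globally using $dy = \widetilde{f}'(x)/(py^{p-1}-1)\,dx$ (derived from differentiating $y^p - y = \widetilde{f}(x)$), combine over the common denominator $x^{i+2}(py^{p-1}-1)$, and simplify the numerator via the identity $y^{p-1-j}(py^{p-1}-1) = \bigl((p-1)y + p\widetilde{f}(x)\bigr) y^{p-2-j}$ (a consequence of $y^p = y + \widetilde{f}(x)$).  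The numerator then takes the form $\bigl[h_{i,j}(x) - (i+1)(p-1)y\bigr] y^{p-2-j}$; splitting $h_{i,j}$ into parts of $x$-degree $\geq i+2$ and $< i+2$, the high-degree piece yields exactly $\psi_{i,j}$, while the low-degree piece together with the $-(i+1)(p-1)y$ term produce a remainder $R$.

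The main obstacle is showing $R$ is regular at $P_\infty$, so that $\imath_2(R) = 0$ and hence $\imath_2(\psi_{i,j}) = d(f_{-(i+1),\,p-1-j})$.  Writing $R$ as a $\Zq$-linear combination of differentials $\omega_{k,l}$ (with some $k < 0$) and applying Lemma~\ref{lemma:ordersofvanishing}, I expect the $-(i+1)(p-1) y^{p-1-j}$ contribution to have $P_\infty$-order $p - 1 + pi + dj \geq p-1$ and the low-degree piece of $h_{i,j}/x^{i+2}$ to contribute terms of minimal $P_\infty$-order $d - 1$---both strictly positive.  Regularity of $\psi_{i,j}$ away from $P_\infty$ is clear because $h_{i,j}^{\geq i+2}(x)/x^{i+2}$ is a polynomial in $x$, and the functions $f_{-(i+1),\,p-1-j}$ carry no data outside $P_\infty$.

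Finally, I would establish linear independence by projecting any hypothetical relation $\sum a_{ij}(\omega_{i,j}, 0) + \sum b_{ij}(\psi_{i,j}, f_{-(i+1),\,p-1-j}) = (df, \imath_1(f))$ onto the $S_{nD}$ factor.  The $P_\infty$-orders of the $f_{-(i+1),\,p-1-j}$ equal $(pi+dj) - (2g-1)$ as $pi+dj$ ranges over representable integers in $[0, 2g-2]$, while the non-constant basis elements of $H^0(\Xlift, \cO(nD))$ from Proposition~\ref{prop:artinschreierassumptions} have $P_\infty$-orders $-(pi+dj)$ for representable $pi+dj \in [1, 2g-1]$.  By the Sylvester--Frobenius involution $n \leftrightarrow (2g-1)-n$ exchanging representable and non-representable integers in $[0, 2g-1]$, these two sets of orders are complementary subsets of $\{-1,\dots,-(2g-1)\}$; since the leading coefficients are units in $\Zq$ (coming from the local expansion of $x$ and $y$ in the uniformizer $u = x^a y^b$), disjointness of leading terms persists modulo $p$.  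Hence all $b_{ij}$ vanish, $f$ must be constant (so $df = 0$), and the residual identity $\sum a_{ij} \omega_{i,j} = 0$ forces the $a_{ij}$ to vanish by Proposition~\ref{prop:artinschreierassumptions}, completing the proof.
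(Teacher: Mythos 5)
Your proposal is correct and follows essentially the same route as the paper: verify well-definedness by checking $\psi_{i,j}-d(f_{-(i+1),p-1-j})$ is regular at $P_\infty$, reduce to the special fiber via Nakayama, and get independence from the $\omega_{i,j}$ spanning $H^0(X,\Omega^1_X)$ together with the $f_{-(i+1),p-1-j}$ being independent in $S_{nD}\otimes\F_p$ modulo $\imath_1(H^0(X,\cO_X(nD)))\cong \HH^1(X,\cO_X)$. The only difference is that you carry out explicitly (and correctly) the two computations the paper delegates to citations of Zhou's Proposition 2.3, namely the expansion of $d(x^{-(i+1)}y^{p-1-j})$ and the Sylvester--Frobenius order count.
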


\begin{proof}
Note that $\omega_{i,j}$ is regular so $(\omega_{i,j},0)$ is a well-defined enhanced differential of the second kind.  Moreover $\psi_{i,j}$ is regular except at $P_\infty$.  Computing $d (f_{-(i+1),p-1-j})$ and using Lemma~\ref{lemma:ordersofvanishing} we see that $\psi_{i,j}-d(f_{-(i+1),p-1-j})$ is regular at $P_\infty$.  This computation is very similar to one in the proof of \cite[Proposition 2.3]{Zhou21}, and shows $(\psi_{i,j},f_{-(i+1),p-1-j})$ is well-defined.
All of these elements are defined over $\Zq$, and there are $2g = (p-1)(d-1)$ such elements, so it suffices to check their independence in the special fiber. 

It is well known that the regular $\omega_{i,j}$ form a basis for $H^0(X,\Omega^1_X)$ as they have distinct orders of vanishing at $P_\infty$.  The natural map 
\[
\HdR^1(X) \to \HH^1(X,\cO_{X}) \simeq (S_{nD}\otimes \F_p)/H^0(X,\cO_X(nD))\]
sends $(\omega,f)$ to $f$, and it is again standard (implicit in \cite[Proposition 2.3]{Zhou21} for example) that the $f_{-(i+1),p-1-j}$ form a basis. 
\end{proof}

\section{Analyzing \texorpdfstring{$p$}{p}-adic Precision} \label{sec:precision}

\subsection{Some Lattices}
The relative positions of several lattices in $\Hrig^1(X)$ are important for understanding the precision of Tuitman's algorithm.  Recall from Remark~\ref{remark:Espaces} that $\Brig$ is the basis for $\Hrig^1(X)$ used internally in Tuitman's implementation, and $\Lrig$ is the $\Zq$-lattice it generates.

\begin{notation}
Fix a $\Zq$-basis $\Bcrys$ for $\Hcrys^1(X)$, and let $\Lcrys$ be the image of $\Hcrys^1(X)$ in $\Hrig^1(X)$ under the inclusions and isomorphisms provided by Fact~\ref{fact:crystallineisomorphisms}.

Using the description in Remark~\ref{remark:Espaces}, the intersection $(E_0 \cap E_\infty) \cap \Omega^1(\Ulift)$ provides a natural $\Zq$-lattice in $\Hrig^1(U)$ and, after passing to the kernel of the residue maps, in $\Hrig^1(X)$.  We denote it by $\Lambda_{\rig}'$, and pick a basis $\Brig'$.

We denote the matrix of $F$ with respect to a basis $\Bcrys$ (resp. $\Brig$) by $[F]_{\crys}$ (resp. $[F]_{\rig}$), and similarly for $V$.
\end{notation}

The matrices for $F$ and $V$ relative to (any) basis for $\Hcrys^1(X)$ lie in $\Mat_{2g,2g}(\Zq)$ as $F$ and $V$ preserve the rank $2g$ $\Zq$-module $\Hcrys^1(X)$.  However, the matrices for $F$ (and $V$) relative to $\Brig$ will usually not be in  $\Mat_{2g,2g}(\Zq)$ and it is crucial but somewhat subtle to understand the valuation of the entries.

\begin{notation} \label{notation:gamma1gamm2}
Let $\gamma_1$ and $\gamma_2$ be the smallest non-negative integers such that
\begin{equation}
p^{\gamma_1} \Lrig \subset \Lcrys \subset p^{-\gamma_2} \Lrig.
\end{equation}
Let $\gamma_3$ be the smallest non-negative integer such that
\[
p^{\gamma_3} \Lrig' \subset \Lrig.
\]
\end{notation}

Basic linear algebra then shows that 
\begin{equation} \label{eq:Frig denominator}
[F]_{\rig} \subset p^{-\gamma_1 - \gamma_2} \Mat_{2g,2g}(\Zq).
\end{equation}

In contrast, Tuitman \cite[Definition 4.4, Proposition 4.5]{TuitmanII} defines explicit $\delta_1,\delta_2$ and proves that
\begin{equation} \label{eq:delta1delta2}
p^{\delta_1} \Lrig' \subset \Lcrys \subset p^{-\delta_2} \Lrig'.
\end{equation}
The matrix for $F$ relative to $\Brig'$ has valuation at least $-(\delta_1 + \delta_2)$.

\begin{remark} \label{remark:error}
The lattice $\Lrig'$ defined above is not equal to the lattice $\Lrig$ used in the implementation of Tuitman's algorithm, despite that being the intent.  This leads to a disconnect between the theoretical results from \cite{TuitmanII} (which are for $\Lrig'$), that are used to predict the precision of the computation, and the actual precision in Tuitman's implementation \cite{pcc}.  This is a source of occasional errors in the computation of the zeta function.  We will discuss an example in depth in Section~\ref{ss:precision example}.
\end{remark}

\subsection{How Much Precision is Needed?}
Computationally, all $p$-adic numbers are represented by integer or rational approximations.  Tuitman's algorithm approximates a lift of the Frobenius map on overconvergent functions and then the matrix $[F]_{\rig}$ for the Frobenius on the rigid cohomology of $X$.  All of these approximations are done modulo a fixed power $p^{\Nint}$: here $\Nint$ is the internal precision of Tuitman's algorithm.   We begin by reviewing a few ideas about $p$-adic precision.

As discussed in Remark~\ref{remark:Espaces}, we can rewrite elements of $\Bcrys$ as a linear combination of the elements of $\Brig$ using the inclusion from Fact~\ref{fact:crystallineisomorphisms}.  This also involves working modulo $p^{\Nint}$, and 
gives us a matrix $C \in \Mat_{2g,2g}(\Qq)$ representing the change of basis from $\Bcrys$ to $\Brig$.  Understanding how accurate the computed values for $[F]_{\crys}$ and $C$ are based on $\Nint$ is essential.

When approximating a $p$-adic number, the absolute precision is $a$ if the computed value is correct modulo $p^a$, i.e. if the valuation of the difference between the computed and correct values is known to be at least $a$.  Thus we speak of computing a $p$-adic number or $p$-adic matrix to precision $a$.  The precision (resp. valuation) of a $p$-adic matrix is the minimum precision (resp. valuation) of the entries.  

We will think about $p$-adic precision for matrices using the approach from \cite{crv14}, where Caruso, Roe, and Vaccon represent the uncertainty in an approximation as a lattice and understand how it changes via analyzing the derivative of the function being computed.  In their language, we are mainly interested in flat precision, where the lattice is $p^a \Mat_{2g,2g}(\Zq)$.  When applying a matrix with valuation at least $-b$ to the lattice, the resulting lattice is contained in $p^{a-b} \Mat_{2g,2g}(\Zq)$, so the result has (flat) precision at least $a-b$.  To illustrate the method, we repeat the proof of \cite[Proposition 3.6]{crv17} as we will need this result.

\begin{lemma} \label{lemma:precisioninverse}
Given $A \in \GL_n(\Qq)$ with precision $a$, suppose $v_p(A^{-1}) = m$.  Then the inverse $A^{-1}$ has precision at least $a+2m$.
\end{lemma}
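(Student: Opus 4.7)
The plan is to use a Neumann series expansion to control the propagated error. Let $A$ denote the true matrix and $\widetilde{A} = A + \Delta$ the computed approximation, where by hypothesis the entrywise valuation satisfies $v_p(\Delta) \geq a$. I want to bound $v_p(\widetilde{A}^{-1} - A^{-1})$ from below by $a + 2m$.

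First I would factor out $A$ and write
\[
\widetilde{A}^{-1} = (A + \Delta)^{-1} = \bigl(A(I + A^{-1}\Delta)\bigr)^{-1} = (I + A^{-1}\Delta)^{-1} A^{-1}.
\]
Since $v_p(A^{-1}) = m$ and $v_p(\Delta) \geq a$, we have $v_p(A^{-1}\Delta) \geq a + m$. The lemma is only informative when $a + 2m < a$, i.e.\ when $m < 0$, and is trivially vacuous if $a + m < 0$ (where one cannot even guarantee $\widetilde{A}$ is invertible close to $A$); so we may assume $a + m \geq 0$, in which case $I + A^{-1}\Delta$ is invertible in $\GL_n(\Zq)$ and its inverse is given by the convergent geometric series $\sum_{k \geq 0} (-A^{-1}\Delta)^k$.

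Next I would compute the difference:
\[
\widetilde{A}^{-1} - A^{-1} = \left(\sum_{k \geq 1} (-A^{-1}\Delta)^k\right) A^{-1}.
\]
The $k$-th term has entrywise valuation at least $k(a+m) + m = ka + (k+1)m$. The minimum over $k \geq 1$ is attained at $k = 1$ under our assumption $a + m \geq 0$, since increasing $k$ by one adds $a + m \geq 0$ to the lower bound. Hence every term has valuation at least $a + 2m$, and therefore so does the sum, giving $v_p(\widetilde{A}^{-1} - A^{-1}) \geq a + 2m$, which is exactly the claim that $\widetilde{A}^{-1}$ has precision at least $a + 2m$.

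There is no real obstacle here; the proof is a short computation once one writes down the Neumann series. The only subtlety is keeping track of which side the expansion occurs on and noting that the leading correction $-A^{-1} \Delta A^{-1}$ already achieves the worst valuation, so higher-order terms contribute nothing new under the assumption $a + m \geq 0$. This matches the derivative-of-inversion viewpoint of \cite{crv14}: the differential of matrix inversion at $A$ is $\Delta \mapsto -A^{-1}\Delta A^{-1}$, whose operator norm in the $p$-adic sense is controlled by $2m$, producing the $a + 2m$ bound directly.
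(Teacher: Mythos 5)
Your proof is correct in substance but takes a more elementary route than the paper. The paper's proof is a one-line application of the precision-tracking formalism of Caruso--Roe--Vaccon (it is literally a repetition of the proof of Proposition 3.6 of \emph{Characteristic polynomials of $p$-adic matrices}): the differential of inversion at $A$ is $dA \mapsto A^{-1}\,dA\,A^{-1}$, and $v_p(A^{-1}\,dA\,A^{-1}) \geq a+2m$. You instead expand $(A+\Delta)^{-1} = \bigl(\sum_{k\geq 0}(-A^{-1}\Delta)^k\bigr)A^{-1}$ and bound every term, which has the advantage of proving the \emph{exact} statement rather than its first-order linearization, at the cost of needing convergence of the Neumann series. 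One small correction: convergence and invertibility of $I + A^{-1}\Delta$ require $v_p(A^{-1}\Delta) > 0$, i.e.\ $a+m \geq 1$, not merely $a+m\geq 0$ as you assert (if $v_p(A^{-1}\Delta)=0$ the series need not converge and $\widetilde{A}$ need not be invertible; e.g.\ $n=1$, $A=p$, $\Delta=-p$). In the boundary case $a+m=0$ the exact statement can actually fail, so the lemma should be read either with the implicit hypothesis $a+m\geq 1$ or in the first-order sense of the derivative formalism, which is how the paper uses it. With that caveat your argument is complete and arguably more self-contained than the paper's.
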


\begin{proof}
The derivative of the matrix inversion map at $A$ sends $dA$ to $A^{-1} dA A^{-1}$.  If $v_p(dA)\geq a$ then $v_p(A^{-1} dA A^{-1}) \geq a+2m$ as desired.
\end{proof}

\begin{proposition} \label{prop:precision1}
Given $[F]_{\rig}$ with precision $N' \colonequals N + \gamma_1+\gamma_2+1$ and the change of basis matrix $C$ from $\Bcrys$ to $\Brig$ with precision $N'' \colonequals N + \gamma_1+2\gamma_2+1$, we obtain $[F]_{\crys}$ with precision $N+1$ and $[V]_{\crys}$ with precision $N$.
\end{proposition}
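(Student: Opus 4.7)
The plan is to observe that $[F]_{\crys} = C^{-1} [F]_{\rig} C$ by the definition of the change-of-basis matrix, and then track precision propagation via the derivative method of \cite{crv14} used in the proof of Lemma~\ref{lemma:precisioninverse}. Writing $F_c = [F]_{\crys}$ and $F_r = [F]_{\rig}$, the relevant valuation bounds all follow from Notation~\ref{notation:gamma1gamm2}: $v_p(C) \geq -\gamma_2$ because each column of $C$ expresses an element of $\Lcrys \subset p^{-\gamma_2} \Lrig$ in the basis $\Brig$; dually, $v_p(C^{-1}) \geq -\gamma_1$ because each column of $C^{-1}$ writes an element of $\Lrig$ in the basis $\Bcrys$ while $p^{\gamma_1} \Lrig \subset \Lcrys$; $v_p(F_r) \geq -\gamma_1 - \gamma_2$ from \eqref{eq:Frig denominator}; and $v_p(F_c) \geq 0$ because $F$ preserves the integral lattice $\Lcrys$.

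Next I would differentiate the map $(C, F_r) \mapsto C^{-1} F_r C$ and simplify using the identities $C^{-1} F_r C = F_c$ and $C^{-1} F_r = F_c C^{-1}$, obtaining
\[
dF_c = -C^{-1}\, dC\, F_c \;+\; C^{-1}\, dF_r\, C \;+\; F_c\, C^{-1}\, dC.
\]
With $v_p(dC) \geq N''$ and $v_p(dF_r) \geq N'$, the three terms have valuations at least $N'' - \gamma_1$, $N' - \gamma_1 - \gamma_2$, and $N'' - \gamma_1$ respectively, which evaluate to $N + 2\gamma_2 + 1$, $N+1$, and $N + 2\gamma_2 + 1$. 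Hence $v_p(dF_c) \geq N+1$, so $F_c$ is computed to precision at least $N+1$. For $V$, the relation $FV = p$ gives $V = p F_c^{-1}$; since $V$ has entries in $\Zq$ we have $v_p(F_c^{-1}) \geq -1$, so Lemma~\ref{lemma:precisioninverse} applied with $m = -1$ yields precision $\geq N-1$ for $F_c^{-1}$, and multiplying by $p$ shifts precision up by $1$, giving $V$ to precision $\geq N$.

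The main obstacle, and the whole reason for the asymmetric definitions of $N'$ and $N''$, is the need to use the identities $C^{-1} F_r C = F_c$ and $C^{-1} F_r = F_c C^{-1}$ before estimating the terms of $dF_c$. A naive bound on the five-factor product $C^{-1}(dC)C^{-1} F_r C$ via $v_p(AB) \geq v_p(A) + v_p(B)$ yields only $N'' - 3\gamma_1 - 2\gamma_2 = N - 2\gamma_1 + 1$, which is insufficient when $\gamma_1 > 0$. Recognizing the Frobenius grouping and exploiting the integrality of $F_c$ is what recovers the missing $2\gamma_1$ digits of precision, while the extra $\gamma_2$ built into $N''$ (relative to $N'$) compensates exactly for the $-\gamma_2$ contribution of the trailing $C$ in the middle term.
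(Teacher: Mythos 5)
Your proof is correct and follows essentially the same route as the paper: differentiate the conjugation map, use the integrality of $[F]_{\crys}$ together with the bounds $v_p(C)\geq-\gamma_2$, $v_p(C^{-1})\geq-\gamma_1$, $v_p([F]_{\rig})\geq-\gamma_1-\gamma_2$ to bound each term by $N+1$, and then apply Lemma~\ref{lemma:precisioninverse} to get $[V]_{\crys}$. Your regrouping of the third term as $F_c\,C^{-1}\,dC$ is in fact a slightly cleaner bookkeeping than the paper's stated bound for that term, but the argument is the same.
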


\begin{proof}
For ease of notation, let $A = [F]_{\crys}$ and $B= [F]_{\rig}$ so that $A= C^{-1}BC$.  The derivative of the conjugation map $(X,Y) \mapsto Y^{-1} X Y$ at $(B,C)$ is
\begin{equation} \label{eq:derivativeconjugation}
C^{-1} dC C^{-1} B C + C^{-1} dB C + C^{-1} B dC.
\end{equation}
Note that $v_p(C^{-1} B C) \geq 0$ as $A=[F]_{\crys}$ has integral entries.  By~\eqref{eq:Frig denominator}, we know $v_p(B) \geq -\gamma_1 - \gamma_2$ and by definition we know that $v_p(C) \geq - \gamma_2$ and that $v_p(C^{-1}) \geq - \gamma_1$.  Thus the valuation of \eqref{eq:derivativeconjugation} is at least 
\[\min(N'' - \gamma_1, N' - \gamma_1-\gamma_2, N''-\gamma_1-2\gamma_2)  = N+1.
\]
 Thus $A = [F]_{\crys}$ has precision $N+1$.

Now $[V]_{\crys} = p [F]_{\crys}^{-1}$ and $v_p([F]_{\crys}^{-1}) = v_p(p^{-1} [V]_{\crys})  \geq -1$ since $[V]_{\crys}$ has integral entries.  Thus Lemma~\ref{lemma:precisioninverse} shows that we obtain $[F]_{\crys}^{-1}$ with precision $N-1$ and hence $[V]_{\crys}$ with precision $N$.
\end{proof}

\begin{remark} \label{remark:precision1}
Similarly, it suffices to know a matrix for $F$ with respect to $\Brig'$ with precision $N + \delta_1 + \delta_2 + 1$ and change of basis matrix from $\Bcrys$ to $\Brig'$ with precision $N + \delta_1 + 2 \delta_2 + 1$.
\end{remark}

This allows us to compute $[F]_{\crys}$ and $[V]_{\crys}$ with any desired precision provided we correctly understand the precision of the outputs of Tuitman's algorithm.  %

\subsection{An Illustrative Example} \label{ss:precision example} As mentioned in Remark~\ref{remark:error}, there is a subtle mismatch between the theoretical analysis in \cite{TuitmanII} and its implementation \cite{pcc}.  We will give an example to demonstrate this issue.

\begin{example} \label{ex:precision}
Consider the Artin-Schreier curve $X$ given by $x^7-x = y^6 + 2y^3 + y^2  +3$ in characteristic $p=7$, with map to $\PP^1$ given by projecting onto the $x$-axis.  We compute $[F]_{\rig}$ and $C$ using Tuitman's algorithm with varying internal precision $\Nint$, and use this to compute $[F]_\crys$ and $[V]_\crys$.  The precision of the results are shown in Table~\ref{table:precision}.  (To see the precision, we also performed the computations with a much larger internal precision, in this case $\Nint=40$, and compared with this reference value.)  Note that increasing the internal precision increases the precision of the outputs as expected.

\begin{table}[hbt]
\begin{tabular}{|c||c|c|c|c|c|c|}
\hline
$\Nint$ & $[F]_{\crys}$ & $[V]_\crys$ & $C$ & $C^{-1}$ & $[F]_{\rig}$  \\
\hline
 19 &1 &0 &19 & 19 &8   \\ 
 20 &2 &1 &20 &20 &9    \\
 21 &3 &2 &21 &21 &10    \\
 22 &4 &3 &22 &22 &11    \\
 26 &8 &7 &26 &26 &15 \\
   \hline
\end{tabular}
\caption{Precision of outputs with varying internal precision in Tuitman's algorithm, $x^7-x = y^6 + 2y^3 + y^2  +3$ over $\F_7$} \label{table:precision}
\end{table}

The computations also reveal that 
\begin{equation}
v_7([F]_{\rig}) = -9, \quad v_7(C) = -9, \quad \text{and} \quad v_7(C^{-1}) =0.
\end{equation}
Recalling Notation~\ref{notation:gamma1gamm2}, this shows that $\gamma_1 = 0$ and $\gamma_2 = 9$.  
Note that $v_p([F]_\rig) \geq - \gamma_1 - \gamma_2 =-9$ in accordance with \eqref{eq:Frig denominator}. 

Using Proposition~\ref{prop:precision1}, to compute the crystalline cohomology of $X$ modulo $p$ (i.e. the de Rham cohomology of $X$ or equivalently the Dieudonn\'{e} module of $\Jac(X)[p]$), we take  $N = 1$ so that $N' = N''=11$.
We see that Tuitman's algorithm gives $[F]_{\rig}$ with enough precision when $\Nint \geq 22$.  

Note the isomorphism class of $\Jac(X)[p]$ or equivalently its Dieudonn\'{e} module $\HdR^1(X)$ is equivalent to the Ekedahl-Oort type of $X$.  Our result agrees with an independent computation of the EO type using code of Colin Weir \cite{weir_code} which computes $F$ and $V$ on $\HdR^1(X)$ using the \v{C}ech description of de Rham cohomology.
\end{example}

\begin{remark}
The precision of $C$ and $C^{-1}$ equals the internal precision.  This is a consequence of the fact we compute $\Bcrys$ and $\Brig$ exactly (i.e. working over $\Q$) instead of working with $p$-adic approximations.  For $\Bcrys$, this is a feature of how we are computing it.  We could instead approximate $\Brig$ to potentially speed up computations (Tuitman's implementation supports either approach), but this leads to errors and in any case finding a basis for $\Hrig^1(X)$  is not the bottleneck.
\end{remark}

Example~\ref{ex:precision} reveals two signs of a problem in the implementation of Tuitman's algorithm:
\begin{enumerate}
\item  The computation that $v_7([F]_\rig) = -9$ contradicts \cite[Corollary 4.6]{TuitmanII}, which asserts a lower bound on the valuation of $[F]_\rig$ which in this case is $-2$ (as $\delta_1=1$ and $\delta_2=1$).  Equivalently, the computed value $\gamma_2=9$ contradicts the second inclusion in \eqref{eq:delta1delta2} as $\delta_2=1$.

\item  The proof of \cite[Proposition 4.7]{TuitmanII} makes a prediction about the precision of $[F]_\rig$ based on $\Nint$ which is incorrect.  For example, when $\Nint= 20$ it asserts the precision of $[F]_\rig$ should be at least $17$, which is false.
\end{enumerate}

Both these are caused by the error noted in Remark~\ref{remark:error}, that the lattice $\Lrig'$ that is analyzed theoretically is not in fact equal to the lattice $\Lrig$ used internally in the implementation.

\subsection{Provably Correct Precision}
The precision errors cause errors when computing zeta functions. There are incomplete fixes in the implementation but no discussion of them in the literature, so we will explain this carefully.

 Running Tuitman's implementation \cite{pcc} on the curve in Example~\ref{ex:precision} and choosing to approximate $\Brig$ $p$-adically (the default choice), internal error-checking catches and reports an error with the numerator of the zeta function.\footnote{An initial approximation produced $v_p(\pr) = -42$, and then the internal precision was increased from $14$ to $99$ due to a partial understanding of the underlying issue.}

Performing the same computation and choosing to compute $\Brig$ exactly gives the correct zeta function and point count.  The most recent version of \cite{pcc} provably computes the zeta function, while the initial version and the version included in Magma use insufficient precision.\footnote{The incorrect versions choose internal precision $\Nint=23$ due to a partial understanding of the issue.  Then $[F]_\rig$ only had $p$-adic precision $12$, while precision $14$ is needed to provably compute the zeta function \cite[Proposition 4.9]{TuitmanII}.}

Let us review how to understand the precision of $[F]_\rig$, which appears in the proof of \cite[Proposition 4.7]{TuitmanII}.  The starting point is applying Frobenius to a differential $\omega_i \in \Brig$, obtaining
\begin{equation}
F(\omega_i) = \sum_{j \in \Z} \left ( \sum_{k=0}^{d_x-1} \frac{w_{i,j,k}(x)}{r^j} b^0_k \right) \frac{dx}{r} 
\end{equation}
where the $w_{i,j,k}(x) \in \Zq[x]$ satisfy $\deg(w_{i,j,k}(x)) < \deg(r)$ and have precision $\Nint$.  We next use idea (1) of Remark~\ref{remark:Espaces} to compute an element of $E_0 \cap E_\infty$ representing the same rigid cohomology class.  This causes a loss of precision as the rewriting process involves $p$-adic denominators.  When applied to terms with $j>0$ (resp. $j<0$), the loss in precision is $f_1(\Nint)$ (resp. $f_2$) using the notation from \cite[Definition 4.8]{TuitmanII}.  (The exact formulas for $f_1$ and $f_2$ are irrelevant for this explanation.)  Finally, this element gives an equivalence class in $E_0 \cap E_\infty / d(B_0 \cap B_\infty) \cong \Hrig^1(U)$, which up to the error in the approximation lies in $\Hrig^1(X)$, the kernel of the residue maps.   So we express it in terms of the basis $\Brig$ and hence obtain information about $[F]_{\rig}$.  We can express this via the composed quotient and projection maps, which is a linear map
\begin{equation} \label{eq:pr}
\pr : E_0 \cap E_\infty \to \Hrig^1(X).
\end{equation}

\begin{example}
For the Artin-Schreier curve in Example~\ref{ex:precision}, if $\Nint = 22$ then $f_1(\Nint) = 3$ and $f_2 = 2$.  Thus we would expect $[F]_\rig$ to have $p$-adic precision at least $22 - 3 = 19$.  However, when rewriting the equivalence class of the element of $E_0 \cap E_\infty$ in terms of $\Brig$, the coefficients actually have $p$-adic denominators.  In particular, the matrix for $\pr$  has valuation $-9$ (using a $\Zq$-basis for $(E_0 \cap E_\infty) \cap \Omega^1(\Ulift)$ and $\Brig$ as the bases).  This explains why $[F]_\rig$ has precision much lower than expected, in this case $11$ instead of $19$.
\end{example}

This last step should not cause a loss of precision if we use a basis for $\Hrig^1(X)$ that is a $\Zq$-basis for the lattice $\Lrig'$, as by definition $\Lrig'$ is the lattice induced by $\Omega^1(\Ulift) \cap (E_0 \cap E_\infty)$.  The fact that the matrix for $\pr$ has negative valuation illustrates that $\Lrig'$ is not equal to $\Lrig$.

\begin{lemma}
We have $v_p(\pr) \geq - \gamma_3$.
\end{lemma}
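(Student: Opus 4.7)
The plan is to unfold the definitions of $v_p(\pr)$, of $\Lrig'$, and of $\gamma_3$, and let them do essentially all of the work; the lemma should be almost tautological once the correct bases are in place.

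First, I would record what $v_p(\pr)$ means. By the convention in the text, $v_p(\pr)$ is the minimum valuation of the entries of the matrix of $\pr$ computed with respect to a $\Zq$-basis of the source lattice $(E_0 \cap E_\infty) \cap \Omega^1(\Ulift)$ on the one hand and the basis $\Brig$ on the target on the other. Thus the inequality to be proved is equivalent to the statement that for every element $\omega$ of such a source basis, the coordinates of $\pr(\omega)$ in the basis $\Brig$ all have valuation at least $-\gamma_3$.

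Second, I would invoke the definition of $\Lrig'$ just established: $\Lrig'$ is by definition the image of $(E_0 \cap E_\infty) \cap \Omega^1(\Ulift)$ in $\Hrig^1(X)$ obtained by passing to the quotient $E_0 \cap E_\infty / d(B_0 \cap B_\infty) \cong \Hrig^1(U)$ and then to the kernel of the residue maps. But that composition is exactly $\pr$ from~\eqref{eq:pr}. Hence $\pr$ sends the integral source lattice into $\Lrig'$; in particular $\pr(\omega) \in \Lrig'$ for each basis element $\omega$.

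Third, I would apply the defining inclusion $p^{\gamma_3} \Lrig' \subset \Lrig$, rewritten as $\Lrig' \subset p^{-\gamma_3} \Lrig$. Since $\Lrig$ is the $\Zq$-span of $\Brig$, this says precisely that any element of $\Lrig'$, when written as a $\Qq$-linear combination of the elements of $\Brig$, has coefficients of valuation at least $-\gamma_3$. Combining this with the previous step gives $v_p(\pr) \geq -\gamma_3$ as required.

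The only potential obstacle is a purely bookkeeping one, namely being careful that the map $\pr$ defined in~\eqref{eq:pr} is identified with the composition of maps used to define $\Lrig'$; once that identification is spelled out there is no analytic content left to check, and no question of precision loss enters the argument.
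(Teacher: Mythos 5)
Your proposal is correct and is essentially identical to the paper's proof: take an element of the lattice $(E_0 \cap E_\infty) \cap \Omega^1(\Ulift)$, observe its image under $\pr$ lands in $\Lrig'$ by the definition of that lattice, and conclude from $p^{\gamma_3}\Lrig' \subset \Lrig$ that the coordinates relative to $\Brig$ have valuation at least $-\gamma_3$. The extra bookkeeping you flag (identifying $\pr$ with the composition defining $\Lrig'$) is indeed the only point to check, and the paper treats it as immediate.
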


\begin{proof}
Let $\omega$ be an arbitrary element of $(E_0 \cap E_\infty) \cap \Omega^1(\Ulift)$.  Then image under the quotient map to $\Hrig^1(X)$ is in $\Lrig'$.  Recalling Notation~\ref{notation:gamma1gamm2}, we conclude the image lies in $p^{-\gamma_3}\Lrig$ as desired.
\end{proof}

There are two reasonable approaches to addressing the error regarding precision caused by $\Lrig$ and $\Lrig'$ differing.
\begin{enumerate}
\item  Correctly compute a basis for $\Lrig'$ and use that basis as $\Brig$ throughout the implementation of the algorithm.

\item  Use the current basis $\Brig$ for $\Lrig$ and account for the loss of precision caused by denominators in the entries of $\pr$.
\end{enumerate} 

Both are reasonable: we choose to implement the second in order to make this work more self-contained, treating Tuitman's algorithm and implementation as black boxes as much as possible. 

\begin{remark}
An additional reason to choose the second approach is that it is non-trivial to actually compute a basis for $\Lrig'$.  As explained in \cite[\S4.1]{TuitmanI}, this is a problem in linear algebra over $\Zq$, and can be approached using Smith normal form like some of the questions in Section~\ref{sec:computing cohomology}.  However, especially when working with approximations to $p$-adic numbers this is tricky to do, and an incorrect workaround in \cite{pcc} is what caused the underlying issue that $\Lrig \neq \Lrig'$.  

The fundamental mathematical problem is that for a matrix $A$ with coefficients in a domain $R$ with field of fractions $K$, computing a $K$-basis the kernel of $A$ does not necessarily give a $R$-basis for the kernel even if the coefficients of the basis elements lie in $R$.
\end{remark}

As discussed in Section~\ref{sec:computing cohomology}, in practice we compute $\Bcrys$ over a number field.  Tuitman's implementation has the option to either compute $\Brig$ over a number field or to compute a $p$-adic approximation: we choose the former option.  This is important as otherwise $\Brig$ and $v_p(\pr)$ can (and do) depend on the internal precision $\Nint$.  These precomputations allow us to know $\gamma_1,\gamma_2, \gamma_3$ (Notation~\ref{notation:gamma1gamm2}) before selecting an internal precision $\Nint$ and computing $[F]_\rig$, which is the most expensive step.

Recall that there were quantities $f_1(n)$ and $f_2$ defined in \cite[Definition 4.8]{TuitmanII} to measure the loss of precision upon rewriting a general differential to lie in $E_0 \cap E_\infty$  Then the discussion above shows:

\begin{proposition} \label{prop:precision2}
Let $N$ be a positive integer.
\begin{itemize}
\item If $\Nint \geq N + \max(f_1(\Nint),f_2) + \gamma_3$ then the $[F]_{\rig}$ computed by the implemented version of Tuitman's algorithm that computed $\Brig$ exactly has precision at least $N$.
\item  If $\Nint \geq N + \max(f_1(\Nint),f_2)$, then the idealized version of Tuitman's algorithm that computes a basis for $\Lrig$ exactly has precision at least $N$.
\end{itemize}
\end{proposition}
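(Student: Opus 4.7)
The plan is to track the $p$-adic precision through the three-stage computation of $[F]_{\rig}$ that was spelled out in the paragraphs preceding the proposition, and then combine the losses. I will then deduce both bullets simultaneously, distinguishing the two variants only at the very last step.

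First, because Tuitman's algorithm represents $p$-adic numbers modulo $p^{\Nint}$, the initial lifted-Frobenius expression $F(\omega_i) = \sum_{j,k}(w_{i,j,k}(x)/r^j)b^0_k\,(dx/r)$ is computed with the polynomials $w_{i,j,k}$ known to precision $\Nint$; call this intermediate output $\eta_i$. Next, I would apply the rewriting procedure of \cite[Theorems 3.7, 3.8]{TuitmanII} to replace $\eta_i$ by an equivalent representative $\tilde\eta_i \in E_0 \cap E_\infty$ of the same class in $\Hrig^1(U)$. By the very definition of the loss functions $f_1(n)$ and $f_2$ in \cite[Definition 4.8]{TuitmanII}, the rewriting of the $j>0$ (respectively $j<0$) tails costs at most $f_1(\Nint)$ (respectively $f_2$) digits of precision, so $\tilde\eta_i$ is known to precision at least $\Nint - \max(f_1(\Nint),f_2)$.

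The last step is to express the class of $\tilde\eta_i$ as a $\Zq$-linear combination of $\Brig$ in order to read off a column of $[F]_{\rig}$; in other words, apply the map $\pr$ of \eqref{eq:pr}. Here the two bullets diverge. For the implemented version, $\Brig$ is a $\Zq$-basis of $\Lrig$ (not $\Lrig'$), and by the lemma just above the proposition we have $v_p(\pr) \geq -\gamma_3$. Applying the flat-precision/derivative principle from \cite{crv14} (exactly as in the proof of Lemma~\ref{lemma:precisioninverse}, with the derivative of $\pr$ being $\pr$ itself since it is linear), applying $\pr$ decreases precision by at most $\gamma_3$, giving $[F]_{\rig}$ with precision at least $\Nint - \max(f_1(\Nint),f_2) - \gamma_3$. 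The hypothesis $\Nint \geq N + \max(f_1(\Nint),f_2) + \gamma_3$ then yields precision at least $N$. For the idealized variant using a $\Zq$-basis of $\Lrig'$, the analog of $\pr$ is by construction the quotient by a submodule that is integrally spanned by $\Omega^1(\Ulift)\cap(E_0\cap E_\infty)$, so it has $v_p \geq 0$ and no precision is lost at this stage; the weaker condition $\Nint \geq N + \max(f_1(\Nint),f_2)$ suffices.

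The main obstacle is the middle step: justifying the precision-loss bound $\max(f_1(\Nint), f_2)$ requires that the rewriting algorithm I run to land in $E_0 \cap E_\infty$ is \emph{exactly} the one covered by Tuitman's precision analysis in \cite[\S4]{TuitmanII} (in particular, treating the reductions above the ramified locus and above infinity separately and in the correct order). The other potentially delicate point is that the flat-precision argument is valid for $\pr$ even though $\pr$ has entries with negative valuation, which follows from the fact that $\pr$ is $\Qq$-linear, so that its derivative agrees with $\pr$ and the standard lattice calculation applies verbatim.
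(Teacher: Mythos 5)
Your proposal is correct and follows essentially the same route as the paper: the paper's proof is a one-line appeal to the preceding discussion (precision $\Nint$ on the $w_{i,j,k}$, loss of $\max(f_1(\Nint),f_2)$ from the rewriting into $E_0\cap E_\infty$, and a further loss of at most $\gamma_3$ from applying $\pr$ in the implemented version versus no loss when $\pr$ is taken with respect to a basis of $\Lrig'$), and your write-up simply makes that chain of losses explicit, including the correct observation that the linearity of $\pr$ lets the flat-precision argument of Lemma~\ref{lemma:precisioninverse} apply verbatim.
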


\begin{proof}
This follows from the above discussion: note for the first case that applying $\pr$ has the potential to decrease the precision by $v_p(\pr) \geq -\gamma_3$.  
\end{proof}

\begin{theorem} \label{thm:precision}
Let $N$ be a positive integer.  To compute $\Hcrys^1(X)$ with $p$-adic precision $N$ (equivalently, the group scheme $\Jac(X)[p^N]$), it suffices to pick an internal precision $\Nint$ which satisfies:
\begin{itemize}
\item $\Nint \geq N + \max(f_1(\Nint),f_2) + \gamma_1 + \gamma_2 + \gamma_3 + 1$ if using the implemented version of Tuitman's algorithm.
\item  $\Nint \geq N + \max(f_1(\Nint),f_2) + \delta_1 + \delta_2 + 1$ if using the idealized version.
\end{itemize}
\end{theorem}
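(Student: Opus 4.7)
The plan is to chain together Proposition~\ref{prop:precision1} (which bounds the precision of $[F]_\crys$ and $[V]_\crys$ in terms of the precisions of the input matrices $[F]_\rig$ and $C$) with Proposition~\ref{prop:precision2} (which bounds the precision of $[F]_\rig$ in terms of the internal precision $\Nint$). Recovering the Dieudonn\'{e} module of $\Jac(X)[p^N]$, equivalently $\Hcrys^1(X)$ modulo $p^N$, amounts by Fact~\ref{fact:Dieudonne} to knowing $[F]_\crys$ and $[V]_\crys$ to precision $N$. Proposition~\ref{prop:precision1} reduces this to the two precision requirements
\[
\text{prec}([F]_\rig) \geq N + \gamma_1 + \gamma_2 + 1, \qquad \text{prec}(C) \geq N + \gamma_1 + 2\gamma_2 + 1.
\]

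For the first bullet I would proceed as follows. The remark following Example~\ref{ex:precision} records that, because $\Bcrys$ and $\Brig$ are computed exactly over the number field $K$, the only loss of precision in forming $C$ comes from truncating modulo $p^\Nint$; hence $\text{prec}(C) = \Nint$ and the condition on $C$ reduces to $\Nint \geq N + \gamma_1 + 2\gamma_2 + 1$. Next, I apply the first bullet of Proposition~\ref{prop:precision2} to the target precision $N + \gamma_1 + \gamma_2 + 1$ for $[F]_\rig$; solving the resulting inequality
\[
\Nint - \max(f_1(\Nint),f_2) - \gamma_3 \geq N + \gamma_1 + \gamma_2 + 1
\]
gives exactly the stated lower bound $\Nint \geq N + \max(f_1(\Nint),f_2) + \gamma_1 + \gamma_2 + \gamma_3 + 1$. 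Taking the maximum of this inequality with $\Nint \geq N + \gamma_1 + 2\gamma_2 + 1$ yields a sufficient condition; in practice the $[F]_\rig$ bound dominates, since the $\gamma_3$ loss plus $\max(f_1(\Nint),f_2)$ is at least as large as $\gamma_2$, but if not one simply enforces the larger of the two bounds.

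For the second bullet the argument is identical once one replaces Proposition~\ref{prop:precision1} by Remark~\ref{remark:precision1} (working with $\Brig'$ and the Tuitman constants $\delta_1,\delta_2$ in place of $\gamma_1,\gamma_2$) and uses the second bullet of Proposition~\ref{prop:precision2}, which has no $\gamma_3$ loss because in the idealized algorithm one truly has a $\Zq$-basis for $\Lrig = \Lrig'$ and the map $\pr$ of \eqref{eq:pr} no longer introduces $p$-adic denominators.

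The only genuinely delicate point is the book-keeping with $f_1(\Nint)$, which appears on both sides of the inequality. I would handle this the same way it is handled throughout Section~\ref{sec:precision}: since $f_1$ grows much more slowly than $\Nint$ (it is essentially logarithmic in $\Nint$ in the setup of \cite[Definition 4.8]{TuitmanII}), the inequality $\Nint \geq N + \max(f_1(\Nint),f_2) + \gamma_1+\gamma_2+\gamma_3+1$ has a solution in $\Nint$ for any $N$ and one can solve for it iteratively. Everything else is a direct substitution from the two cited propositions.
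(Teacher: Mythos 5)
Your proof is correct and takes essentially the same route as the paper, whose entire proof is the one line ``Combine Proposition~\ref{prop:precision1} and Remark~\ref{remark:precision1} with Proposition~\ref{prop:precision2}.'' Your extra observation that the separate requirement $\Nint \geq N + \gamma_1 + 2\gamma_2 + 1$ coming from the precision of $C$ is not formally implied by the stated bound (so one should in principle take the maximum of the two conditions) is a legitimate point of care that the paper's one-line proof glosses over.
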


\begin{proof}
Combine Proposition~\ref{prop:precision1} and Remark~\ref{remark:precision1}  with Proposition~\ref{prop:precision2}.
\end{proof}

\begin{remark} \label{remark:asymptotics}
Note that $f_1(x)$ is $O(\log(x))$, so it is easy to find $\Nint$ satisfying the inequality in Theorem~\ref{thm:precision}.  More precisely, assuming \cite[Assumption 2]{TuitmanII} and recalling \cite[Proposition 4.1, Definition 4.8]{TuitmanII} we see that:
\begin{align*}
f_1(x) & = O(\log(x) + \log(d_x) + \log(d_y)) \\ 
f_2 &= O( \log(d_x) + \log(d_y))\\
\delta_1,\delta_2 &= O( \log(d_x) + \log(d_y)).
\end{align*}
Thus when using the implemented version of Tuitman's algorithm we may choose $\Nint \in O(N \log(N)+ \log(d_x) + \log(d_y) + \gamma_1 + \gamma_2 + \gamma_3)$.  For the idealized version, we only need $\Nint \in O(N \log(N) + \log(d_x) + \log(d_y) )$.
\end{remark}

\begin{remark} \label{remark:don'tanalyzegamma}
To analyze our implementation, we would need a result on the growth rate of $\gamma_1,\gamma_2,\gamma_3$. This has little intrinsic mathematical interest as $\Lrig$ has little intrinsic mathematical meaning: it is the result of an incorrect attempt at computing the more natural $\Lrig'$.
\end{remark}

\section{Analysis of the Algorithm} \label{sec:analysis}

We analyze Algorithm~\ref{alg1}, tracking the assumptions and auxiliary data.  The running time is expected to be polynomial in $p$, $d_x$, $d_y$, $r$, and $N$.

In order to get started, we make Assumption~\ref{assumption:nicelift} about the existence of a nice lift and integral bases with nice properties.  In practice, we can often find a nice lift and automatically compute the required integral bases in polynomial time (Remark~\ref{remark:overnumfield}).  We also must assume \cite[Assumption 2]{TuitmanII} concerning the orders of poles of the functions making up the integral bases to make use of Tuitman's complexity analysis.

We say a running time is $\widetilde{O}(f(n))$ if the running time is $O(f(n) \log(f(n))^m)$ for some integer $m$.

\subsection{Computing a Crystalline Basis}  As discussed in Section~\ref{sec:computing cohomology}, we first attempt to compute functions, differentials, and uniformizers for use in Theorem~\ref{theorem:computingderham}.  We do so by working over a number field where we can make use of existing algorithms for computing Riemann-Roch spaces and spaces of differentials, and must assume that these give results which work over $\Zq$ as well (Assumption~\ref{assumption:funcdif}).  These run in polynomial time (Remark~\ref{remark:RRpoltime}). As in Theorem~\ref{theorem:computingderham}, we then compute a basis $\Bcrys$ for $\Hcrys^1(X) \simeq \HdR^1(\Xlift)$ in polynomial time.

\subsection{Computing a Rigid Basis}  The time for computing an approximation to $\Brig$ is in $\widetilde{O}(\log(p) d_x^{5} d_y^3 r \Nint)$: see \cite[\S4.1]{TuitmanII}, and simplifying using a crude approximation on the exponent $\theta$ for matrix multiplication.  

\begin{remark}
In practice, we actually compute $\Brig$ exactly (over a number field) to correctly analyze the precision.  The same reasoning shows we need $O(d_x^{2 \theta} d_y^\theta) \subset O(d_x^5 d_y^3)$ operations in that number field.
\end{remark}

\subsection{Computing a Matrix for Frobenius}  We use Tuitman's algorithm to compute $[F]_\rig$.  Following \cite[\S4.2,4.3]{TuitmanII}, we see this may be done in time
\[
\widetilde{O}(p d_x^4 d_y^2 (\Nint + d_x) r \Nint + d_x^5 d_y^3 r \Nint).
\]
The internal precision $\Nint$ must satisfy the condition in Theorem~\ref{thm:precision}: by Remark~\ref{remark:asymptotics} we may choose $\Nint \in \widetilde{O}(N)$.

\begin{remark}
To analyze the actual implementation (correcting for the disagreement between $\Lrig$ and $\Lrig'$), we would need to control $\gamma_1,\gamma_2,\gamma_3$ (see Remark~\ref{remark:don'tanalyzegamma}).
\end{remark}

\subsection{Change Basis}  Expressing $\Bcrys$ in terms of $\Brig$ to change basis is linear algebra over $\Qq$ (although the implementation must work over a number field).  The crystalline basis vector will already lie in $E_0 \cap E_\infty$, so it suffices to apply the map $\pr$ from \eqref{eq:pr}, i.e. multiply by the matrix representing this map.  As analyzed \cite[\S4.3]{TuitmanII}, this can be done in time $\widetilde{O}(\log(p) d_x^5 d_y^3 r \Nint)$.

\subsection{Miscellaneous Steps}
Computing $V = p F^{-1}$ is another linear algebra computation which can be performed in polynomial time.
There are also a few steps intentionally left out of Tuitman's analysis: see \cite[Remark 4.12]{TuitmanII} for a discussion.  In practice their running time is insignificant.  

\begin{remark} \label{remark:runningtime}
In practice, the two most expensive steps are the function field algorithms to compute the functions, differentials, and local expansions needed in Theorem~\ref{theorem:computingderham} and the piece of Tuitman's algorithm computing the matrix for Frobenius on rigid cohomology.  Approximating the crystalline cohomology of the Artin-Schreier curve in Example~\ref{ex:precision} with $7$-adic precision $10$, these two steps take $10.6$ seconds and $48.9$ seconds respectively, while the entire computation takes $73.6$ seconds.\footnote{Running on an Intel(R) Core(TM) i9-10900X CPU @ 3.70GHz.}
\end{remark}

\providecommand{\bysame}{\leavevmode\hbox to3em{\hrulefill}\thinspace}
\providecommand{\MR}{\relax\ifhmode\unskip\space\fi MR }
\providecommand{\MRhref}[2]{%
  \href{http://www.ams.org/mathscinet-getitem?mr=#1}{#2}
}
\providecommand{\href}[2]{#2}

\end{document}